\documentclass{amsart}
\usepackage{amsfonts}
\usepackage{amsmath}
\usepackage{amssymb}
\usepackage{graphicx}

\setcounter{MaxMatrixCols}{10}

\newtheorem{theorem}{Theorem}
\theoremstyle{plain}

\newtheorem{case}{Case}

\newtheorem{definition}{Definition}

\newtheorem{lemma}{Lemma}

\newtheorem{remark}{Remark}

\numberwithin{equation}{section}

\begin{document}
\title[Pseudo-Differential Operators and Bessel Potentials]{Non-Archimedean
Pseudo-Differential Operators with Bessel potentials}
\subjclass{}
\keywords{Pseudo-differential operators, $m-$dissipative operators, the
positive \ maximum principle, heat hernel, non-archimedean analysis.}

\begin{abstract}
In this article, we study a class of non-archimedean pseudo-differential
operators associated via Fourier transform to the Bessel potentials. These
operators (which we will denote as $J^{\alpha },$ $\alpha >n$) are of the
form
\begin{equation*}
(J^{\alpha }\varphi )(x)=\mathcal{F}_{\xi \rightarrow x}^{-1}\left[
(\max
\{1,||\xi ||_{p}\})^{-\alpha }\widehat{\varphi }(\xi )\right] ,\text{ }%
\varphi \in \mathcal{D}(%
\mathbb{Q}
_{p}^{n}),\text{ }x\in
\mathbb{Q}
_{p}^{n}.
\end{equation*}%
We show that the fundamental solution $Z(x,t)$ of the $p-$adic heat equation
naturally associated to these operators satisfies $Z(x,t)\leq 0,x\in
\mathbb{Q}
_{p}^{n},t>0.$ So this equation describes the cooling (or loss of heat) in a
given region over time.

Unlike the archimedean classical theory, although the operator symbol $%
-J^{\alpha }$ is not a function negative definite, we show that the operator
$-J^{\alpha }$ satisfies the positive maximum principle on $C_{0}(%
\mathbb{Q}
_{p}^{n})$. Moreover, we will show that the closure $\overline{-J^{\alpha }}$
of the operator $-J^{\alpha }$ is single-valued and generates a strongly
continuous, positive, contraction semigroup $\left\{ T(t)\right\} $ on $%
C_{0}(%
\mathbb{Q}
_{p}^{n})$.

On the other hand, we will show that the operator $-J^{\alpha }$ is $m-$%
dissipative and is the infinitesimal generator of a $C_{0}-$semigroup of
contractions $T(t),$ $t\geq 0,$ on $L^{2}(%
\mathbb{Q}
_{p}^{n}).$ The latter will allow us to show that for $f\in
L^{1}([0,T):L^{2}(%
\mathbb{Q}
_{p}^{n}))$, the function
\begin{equation*}
u(t)=T(t)u_{0}+\int\nolimits_{0}^{t}T(t-s)f(s)ds,\text{ \ \ }0\leq
t\leq T,
\end{equation*}%
is the mild solution of the initial value problem
\begin{equation*}
\left\{
\begin{array}{ll}
\frac{\partial u}{\partial t}(x,t)=-J^{\alpha }u(x,t)+f(t) & t>0\text{,\ }%
x\in
\mathbb{Q}
_{p}^{n} \\
&  \\
u(x,0)=u_{0}\in L^{2}(%
\mathbb{Q}
_{p}^{n})\text{.} &
\end{array}%
\right.
\end{equation*}
\end{abstract}

\author{Ismael Guti\'{e}rrez Garc\'{\i}a}
\email{isgutier@uninorte.edu.co}
\author{Anselmo Torresblanca-Badillo}
\email{atorresblanca@uninorte.edu.co}
\address{Universidad del Norte, Departamento de Matem\'{a}ticas y Est\'{a}%
distica, Km. 5 V\'{\i}a Puerto Colombia. Barranquilla, Colombia. }
\maketitle

\section{\protect\bigskip Introduction}

In this article, we study a class of non-archimedean pseudo-differential
operators associated via Fourier transform to the Bessel potentials. If $%
f\in \mathcal{D}^{\prime }(%
\mathbb{Q}
_{p}^{n})$ (here $%
\mathbb{Q}
_{p}^{n}$ denotes the p-adic numbers and $\mathcal{D}^{\prime }(%
\mathbb{Q}
_{p}^{n})$ is called the space of distributions in $%
\mathbb{Q}
_{p}^{n}$), $\alpha \in
\mathbb{C}
$ we define the $n-$dimensional $p-$adic Bessel potential of order $\alpha $
of $f$ by
\begin{equation}
(J^{\alpha }f)^{\wedge }=\left( \max \left[ 1,||x||_{p}\right] \right)
^{-\alpha }\widehat{f}.  \label{equation_1}
\end{equation}%
Suppose $\alpha \in
\mathbb{C}
$ with ${Re}(\alpha )>0.$ Defining on $%
\mathbb{Q}
_{p}^{n}$ and with values in $%
\mathbb{R}
_{+}:=\left\{ r\in
\mathbb{R}
:r\geq 0\right\} $\ the function $K_{\alpha }$ as follows%
\begin{equation*}
K_{\alpha }(x)=\left\{
\begin{array}{c}
\frac{1-p^{-\alpha }}{1-p^{\alpha -n}}\left( ||x||_{p}^{\alpha -n}-p^{\alpha
-n}\right) \Omega (||x||_{p})\text{ if }\alpha \neq n \\
\\
(1-p^{-n})\log _{p}(\frac{p}{||x||_{p}})\Omega (||x||_{p})\text{ \ \ \ \ \ \
\ if }\alpha =n%
\end{array}%
\right.
\end{equation*}%
we have that $K_{\alpha }\in L^{1}(%
\mathbb{Q}
_{p}^{n})$ and $\widehat{K}_{\alpha }(\xi )=(\max \{1,||\xi
||_{p}\})^{-\alpha },$ see Remark \ref{Lemma 5.2 p-138-139}.

For our purposes, in this article, we will consider $\alpha \in
\mathbb{R}
$ with $\alpha >n.$ The condition $\alpha >n$ is completely necessary to
obtain the inequality (\ref{negative}), which will be crucial in this
article.

For $\varphi \in \mathcal{D}(%
\mathbb{Q}
_{p}^{n})$ and taking inverse Fourier transform on both sides of (\ref%
{equation_1}), we have that
\begin{equation*}
(J^{\alpha }\varphi )(x)=\mathcal{F}_{\xi \rightarrow x}^{-1}\left[
\widehat{K}_{\alpha }(\xi )\widehat{\varphi }(\xi )\right]
=\mathcal{F}
_{\xi \rightarrow x}^{-1}\left[ (\max \{1,||\xi ||_{p}\})^{-\alpha }\widehat{%
\varphi }(\xi )\right] ,\text{ }x\in
\mathbb{Q}
_{p}^{n},
\end{equation*}%
is a pseudo-differential operator with symbol $\widehat{K}_{\alpha }(\xi
)=(\max \{1,||\xi ||_{p}\})^{-\alpha }.$

In this paper we study the fundamental solution $($denoted by $%
Z(x,t):=Z_{t}(x),$ $x\in
\mathbb{Q}
_{p}^{n},$ $t>0$ $)$ of the $p-$adic pseudodifferential equations of the
form
\begin{equation}
\left\{
\begin{array}{ll}
\frac{\partial u}{\partial t}(x,t)=J^{\alpha }u(x,t)\text{,} & t\in \left[
0,\infty \right) \text{,\ }x\in
\mathbb{Q}
_{p}^{n} \\
&  \\
u(x,0)=u_{0}(x)\in \mathcal{D}(%
\mathbb{Q}
_{p}^{n})\text{,} &
\end{array}%
\right.  \label{eq_2}
\end{equation}%
which is the $p$-adic counterparts of the archimedean heat equations.

Unlike the fundamental solution studied at \cite{Casas-Zuniga}, \cite{Ch-Z-1}%
, \cite{Gu-To-1}, \cite{Gu-To-2}, \cite{Khrennikov-Kochubei}, \cite%
{Kochubei-1991}, \cite{Kochubei-2001}, \cite{R-Zu}, \cite{To-Z-2}, \cite%
{To-Z}, \cite{Zu}, \cite{Z-G}, \cite{Z1}, et al., we obtain that $Z(x,t)\leq
0,$ $\int_{%
\mathbb{Q}
_{p}^{n}}Z(x,t)d^{n}x=e^{-t},$ $x\in
\mathbb{Q}
_{p}^{n},$ $t>0,$ among other properties, see Theorem \ref{Theorem_Z(x,t)}.
Since the heat kernel contains a large amount of redundant information, in
our case, the $p-$adic heat equation describes the cooling (or loss of heat)
in a given region over time.\ \ \ \

The connections between pseudodifferential operators whose symbol is a
negative definite function and that satisfies the positive maximum principle
have been studied intensively in the archimedean setting, since, a
sufficient and necessary condition for that a pseudodifferential operator
satisfies the positive maximum principle is that its symbol be a negative
definite function, see e.g. \cite{Courrege}, \cite{Hoh-Libro}-\cite%
{Jacob-Schilling}, \cite{Schilling}, et al.\ In our case, the
pseudodifferential operator $-J^{\alpha }$ satisfies the positive maximum
principle on $C_{0}(%
\mathbb{Q}
_{p}^{n})$, however, its symbol $(\max \{1,||\xi ||_{p}\})^{-\alpha }$ is
not a negative definite function, see Theorem \ref{maximum_principle} and
Remark \ref{def_negative_definite}, respectively.

On the other hand, the study of the $m$-dissipative operators self-adjoint
on the Hilbert spaces is of great importance, since these are exactly the
generators of contraction semigroups and $C_{0}$-semigroups, see \cite{C-H},
\cite{Pazy}. Motivated by it, we are interested in knowing if the
pseudo-differential operator $-J^{\alpha }$ is an $m$-dissipative operators
and self-adjoint on $L^{2}(%
\mathbb{Q}
_{p}^{n}).$

The article is organized as follows: In Section \ref{Fourier Analysis}, we
will collect some basic results on the $p$-adic analysis and fix the
notation that we will use through the article. In Section \ref{Section_2},
we study a class of non-archimedean pseudo-differential operators associated
via Fourier transform to the Bessel potentials, these operators we denote by
$J^{\alpha },$ $\alpha \in
\mathbb{C}
$. For our purposes, we will consider the case when $\alpha >n.$ In
addition, we will study certain properties corresponding to the fundamental
solution $Z(x,t),$ $x\in
\mathbb{Q}
_{p}^{n},$ $t>0$ of the $p-$adic heat equation naturally associated to these
operators$.$ In Section \ref{Section_3}, we will show that the operator $%
-J^{\alpha }$ satisfies the positive maximum principle on $C_{0}(%
\mathbb{Q}
_{p}^{n})$. Moreover, as for all $\lambda >0$ we have that $Ran(\lambda
+J^{\alpha })$ is dense in $C_{0}(%
\mathbb{Q}
_{p}^{n})$, we have that the closure $\overline{-J^{\alpha }}$ of the
operator $-J^{\alpha }$ on $C_{0}(%
\mathbb{Q}
_{p}^{n})$ is single-valued and generates a strongly continuous, positive,
contraction semigroup $\left\{ T(t)\right\} $ on $C_{0}(%
\mathbb{Q}
_{p}^{n}),$ see Theorem \ref{the_closure}. In the section \ref{Section_4},
we will show that the operator $-J^{\alpha }:$ $L^{2}(%
\mathbb{Q}
_{p}^{n})\rightarrow L^{2}(%
\mathbb{Q}
_{p}^{n})$ is $m-$dissipative and self-adjoint, see Theorem \ref%
{m-dissipative} and Lemma \ref{self-adjoint}, respectively. We can get that
the linear operator $-J^{\alpha }$ is the infinitesimal generator of a $%
C_{0}-$semigroup of contraction $T(t)$, $t\geq 0,$ on $L^{2}(%
\mathbb{Q}
_{p}^{n}).$ Moreover, when considering the problem the inhomogeneous initial
problem%
\begin{equation*}
\left\{
\begin{array}{ll}
\frac{\partial u}{\partial t}(x,t)=-J^{\alpha }u(x,t)+f(t) & t>0\text{,\ }%
x\in
\mathbb{Q}
_{p}^{n} \\
&  \\
u(x,0)=u_{0}\in L^{2}(%
\mathbb{Q}
_{p}^{n})\text{,} &
\end{array}%
\right.
\end{equation*}%
with $f:[0,T]\rightarrow L^{2}(%
\mathbb{Q}
_{p}^{n}),$ $T>0,$ we have that the function
\begin{equation*}
u(t)=T(t)u_{0}+\int\nolimits_{0}^{t}T(t-s)f(s)ds,\text{ \ \ }0\leq
t\leq T,
\end{equation*}%
is the mild solution of the initial value problem on $[0,T],$ see Remark \ref%
{C_0 semigroups}. \ \ \

\section{\label{Fourier Analysis}Fourier Analysis on $%
\mathbb{Q}
_{p}^{n}$: Essential Ideas}

\subsection{The field of $p$-adic numbers}

Along this article $p$ will denote a prime number. The field of $p-$adic
numbers $%
\mathbb{Q}
_{p}$ is defined as the completion of the field of rational numbers $\mathbb{%
Q}$ with respect to the $p-$adic norm $|\cdot |_{p}$, which is defined as
\begin{equation*}
\left\vert x\right\vert _{p}=\left\{
\begin{array}{lll}
0\text{,} & \text{if} & x=0 \\
&  &  \\
p^{-\gamma }\text{,} & \text{if} & x=p^{\gamma }\frac{a}{b}\text{,}%
\end{array}%
\right.
\end{equation*}%
where $a$ and $b$ are integers coprime with $p$. The integer $\gamma
:=ord(x) $, with $ord(0):=+\infty $, is called the\textit{\ }$p-$\textit{%
adic order of} $x$.

Any $p-$adic number $x\neq 0$ has a unique expansion of the form
\begin{equation*}
x=p^{ord(x)}\sum_{j=0}^{\infty }x_{j}p^{j},
\end{equation*}%
where $x_{j}\in \{0,1,2,\dots ,p-1\}$ and $x_{0}\neq 0$. By using this
expansion, we define \textit{the fractional part of }$x\in \mathbb{Q}_{p}$,
denoted $\{x\}_{p}$, as the rational number
\begin{equation*}
\left\{ x\right\} _{p}=\left\{
\begin{array}{lll}
0\text{,} & \text{if} & x=0\text{ or }ord(x)\geq 0 \\
&  &  \\
p^{ord(x)}\sum_{j=0}^{-ord_{p}(x)-1}x_{j}p^{j}\text{,} & \text{if} &
ord(x)<0.%
\end{array}%
\right.
\end{equation*}%
We extend the $p-$adic norm to $%
\mathbb{Q}
_{p}^{n}$ by taking
\begin{equation*}
||x||_{p}:=\max_{1\leq i\leq n}|x_{i}|_{p},\text{ for }x=(x_{1},\dots
,x_{n})\in
\mathbb{Q}
_{p}^{n}.
\end{equation*}%
For $r\in \mathbb{Z}$, denote by $B_{r}^{n}(a)=\{x\in
\mathbb{Q}
_{p}^{n};||x-a||_{p}\leq p^{r}\}$ \textit{the ball of radius }$p^{r}$
\textit{with center at} $a=(a_{1},\dots ,a_{n})\in
\mathbb{Q}
_{p}^{n}$, and take $B_{r}^{n}(0)=:B_{r}^{n}$. Note that $%
B_{r}^{n}(a)=B_{r}(a_{1})\times \cdots \times B_{r}(a_{n})$, where $%
B_{r}(a_{i}):=\{x\in
\mathbb{Q}
_{p};|x_{i}-a_{i}|_{p}\leq p^{r}\}$ is the one-dimensional ball of radius $%
p^{r}$ with center at $a_{i}\in
\mathbb{Q}
_{p}$. The ball $B_{0}^{n}$ equals the product of $n$ copies of $B_{0}=%
\mathbb{Z}_{p}$, \textit{the ring of }$p-$\textit{adic integers of }$%
\mathbb{Q}
_{p}$. We also denote by $S_{r}^{n}(a)=\{x\in \mathbb{Q}%
_{p}^{n};||x-a||_{p}=p^{r}\}$ \textit{the sphere of radius }$p^{r}$ \textit{%
with center at} $a=(a_{1},\dots ,a_{n})\in
\mathbb{Q}
_{p}^{n}$, and take $S_{r}^{n}(0)=:S_{r}^{n}$. The balls and spheres are
both open and closed subsets in $%
\mathbb{Q}
_{p}^{n}$.

As a topological space $\left(
\mathbb{Q}
_{p}^{n},||\cdot ||_{p}\right) $ is totally disconnected, i.e. the only
connected \ subsets of $%
\mathbb{Q}
_{p}^{n}$ are the empty set and the points. A subset of $%
\mathbb{Q}
_{p}^{n}$ is compact if and only if it is closed and bounded in $%
\mathbb{Q}
_{p}^{n}$, see e.g. \cite[Section 1.3]{V-V-Z}, or \cite[Section 1.8]%
{Albeverio et al}. The balls and spheres are compact subsets. Thus $\left(
\mathbb{Q}
_{p}^{n},||\cdot ||_{p}\right) $ is a locally compact topological space.

We will use $\Omega \left( p^{-r}||x-a||_{p}\right) $ to denote the
characteristic function of the ball $B_{r}^{n}(a)$. We will use the notation
$1_{A}$ for the characteristic function of a set $A$. Along the article $%
d^{n}x$ will denote a Haar measure on $%
\mathbb{Q}
_{p}^{n}$ normalized so that $\int_{%
\mathbb{Z}
_{p}^{n}}d^{n}x=1.$

\subsection{Some function spaces}

A complex-valued function $\varphi $ defined on $%
\mathbb{Q}
_{p}^{n}$ is called \textit{locally constant} if for any $x\in
\mathbb{Q}
_{p}^{n}$ there exist an integer $l(x)\in \mathbb{Z}$ such that \ \
\begin{equation*}
\varphi (x+x^{\prime })=\varphi (x)\text{ for }x^{\prime }\in B_{l(x)}^{n}.
\end{equation*}%
A function $\varphi :%
\mathbb{Q}
_{p}^{n}\rightarrow \mathbb{C}$ is called a \textit{Bruhat-Schwartz function
(or a test function)} if it is locally constant with compact support. The $%
\mathbb{C}$-vector space of Bruhat-Schwartz functions is denoted by $%
\mathcal{D}(%
\mathbb{Q}
_{p}^{n})=:\mathcal{D}$. Let $\mathcal{D}^{\prime }(%
\mathbb{Q}
_{p}^{n})=:\mathcal{D}^{\prime }$ denote the set of all continuous
functional (distributions) on $\mathcal{D}$. The natural pairing $\mathcal{D}%
^{\prime }(%
\mathbb{Q}
_{p}^{n})\times \mathcal{D}(%
\mathbb{Q}
_{p}^{n})\rightarrow \mathbb{C}$ is denoted as $\left( T,\varphi \right) $
for $T\in \mathcal{D}^{\prime }(%
\mathbb{Q}
_{p}^{n})$ and $\varphi \in \mathcal{D}(%
\mathbb{Q}
_{p}^{n})$, see e.g. \cite[Section 4.4]{Albeverio et al}.

Every $f\in $ $L_{loc}^{1}$ defines a distribution $f\in \mathcal{D}^{\prime
}\left(
\mathbb{Q}
_{p}^{n}\right) $ by the formula
\begin{equation*}
\left( f,\varphi \right) =\int\limits_{%
\mathbb{Q}
_{p}^{n}}f\left( x\right) \varphi \left( x\right) d^{n}x.
\end{equation*}%
Such distributions are called \textit{regular distributions}.

Given $\rho \in \lbrack 0,\infty )$, we denote by $L^{\rho }\left(
\mathbb{Q}
_{p}^{n},d^{n}x\right) =L^{\rho }\left(
\mathbb{Q}
_{p}^{n}\right) :=L^{\rho },$ the $%
\mathbb{C}
-$vector space of all the complex valued functions $g$ satisfying $\int_{%
\mathbb{Q}
_{p}^{n}}\left\vert g\left( x\right) \right\vert ^{\rho }d^{n}x<\infty $, $%
L^{\infty }\allowbreak :=L^{\infty }\left(
\mathbb{Q}
_{p}^{n}\right) =L^{\infty }\left(
\mathbb{Q}
_{p}^{n},d^{n}x\right) $ denotes the $%
\mathbb{C}
-$vector space of all the complex valued functions $g$ such that the
essential supremum of $|g|$ is bounded.

Let denote by $C(%
\mathbb{Q}
_{p}^{n},\mathbb{C})=:C_{\mathbb{C}}$ the $%
\mathbb{C}
-$vector space of all the complex valued functions which are continuous, by $%
C(%
\mathbb{Q}
_{p}^{n},\mathbb{R})=:C_{\mathbb{R}}$ the $\mathbb{R}-$vector space of
continuous functions. Set
\begin{equation*}
C_{0}(%
\mathbb{Q}
_{p}^{n},\mathbb{C}):=\left\{ f:%
\mathbb{Q}
_{p}^{n}\rightarrow
\mathbb{C}
;\text{ }f\text{ is continuous and }\lim_{x\rightarrow \infty
}f(x)=0\right\} ,
\end{equation*}%
where $\lim_{x\rightarrow \infty }f(x)=0$ means that for every $\epsilon >0$
there exists a compact subset $B(\epsilon )$ such that $|f(x)|<\epsilon $
for $x\in
\mathbb{Q}
_{p}^{n}\backslash B(\epsilon ).$ We recall that $(C_{0}(%
\mathbb{Q}
_{p}^{n},\mathbb{C}),||\cdot ||_{L^{\infty }})$ is a Banach space.

\subsection{Fourier transform}

Set $\chi _{p}(y)=\exp (2\pi i\{y\}_{p})$ for $y\in
\mathbb{Q}
_{p}$. The map $\chi _{p}(\cdot )$ is an additive character on $%
\mathbb{Q}
_{p}$, i.e. a continuous map from $\left(
\mathbb{Q}
_{p},+\right) $ into $S$ (the unit circle considered as multiplicative
group) satisfying $\chi _{p}(x_{0}+x_{1})=\chi _{p}(x_{0})\chi _{p}(x_{1})$,
$x_{0},x_{1}\in
\mathbb{Q}
_{p}$. The additive characters of $%
\mathbb{Q}
_{p}$ form an Abelian group which is isomorphic to $\left(
\mathbb{Q}
_{p},+\right) $, the isomorphism is given by $\xi \mapsto \chi _{p}(\xi x)$,
see e.g. \cite[Section 2.3]{Albeverio et al}.

Given $x=(x_{1},\dots ,x_{n}),$ $\xi =(\xi _{1},\dots ,\xi _{n})\in
\mathbb{Q}
_{p}^{n}$, we set $x\cdot \xi :=\sum_{j=1}^{n}x_{j}\xi _{j}$. If $f\in L^{1}$
its Fourier transform is defined by
\begin{equation*}
(\mathcal{F}f)(\xi )=\int_{%
\mathbb{Q}
_{p}^{n}}\chi _{p}(\xi \cdot x)f(x)d^{n}x,\quad \text{for }\xi \in
\mathbb{Q}
_{p}^{n}.
\end{equation*}%
We will also use the notation $\mathcal{F}_{x\rightarrow \xi }f$ and $%
\widehat{f}$\ for the Fourier transform of $f$. The Fourier transform is a
linear isomorphism from $\mathcal{D}(%
\mathbb{Q}
_{p}^{n})$ onto itself satisfying
\begin{equation}
(\mathcal{F}(\mathcal{F}f))(\xi )=f(-\xi ),  \label{FF(f)}
\end{equation}%
for every $f\in \mathcal{D}(%
\mathbb{Q}
_{p}^{n}),$ see e.g. \cite[Section 4.8]{Albeverio et al}. If $f\in L^{2},$
its Fourier transform is defined as
\begin{equation*}
(\mathcal{F}f)(\xi )=\lim_{k\rightarrow \infty }\int_{||x||\leq p^{k}}\chi
_{p}(\xi \cdot x)f(x)d^{n}x,\quad \text{for }\xi \in
\mathbb{Q}
_{p}^{n},
\end{equation*}%
where the limit is taken in $L^{2}.$ We recall that the Fourier transform is
unitary on $L^{2},$ i.e. $||f||_{L^{2}}=||\mathcal{F}f||_{L^{2}}$ for $f\in
L^{2}$ and that (\ref{FF(f)}) is also valid in $L^{2}$, see e.g. \cite[%
Chapter $III$, Section 2]{Taibleson}.

\section{\label{Section_2}Pseudodifferential Operators and Heat Kernel
Associated with Bessel Potentials}

In this section, we study a class of non-archimedean pseudo-differential
operators associated via Fourier transform to the Bessel potentials. We will
also study some aspects associated with the fundamental solution of the heat
equation associated with these operators.

\begin{definition}
\cite[Definition-p. 137]{Taibleson} If $f\in \mathcal{D}^{\prime }(%
\mathbb{Q}
_{p}^{n})$, $\alpha \in
\mathbb{C}
$ we define the $n-$dimensional $p-$adic Bessel potential of order $\alpha $
of $f$ by
\begin{equation}
(J^{\alpha }f)^{\wedge }=\left( \max \left[ 1,||x||_{p}\right] \right)
^{-\alpha }\widehat{f}.  \label{Def_Bessel}
\end{equation}%
We will define the distribution with compact support $G^{\alpha }$ as
\begin{equation}
\widehat{G}^{\alpha }(x)=\left( \max \left[ 1,||x||_{p}\right] \right)
^{-\alpha }.  \label{G_alpha}
\end{equation}
\end{definition}

\begin{remark}
\label{Proposition 5.1-p-173}\cite[Proposition 5.1-p. 137]{Taibleson} For $%
\alpha ,$ $\beta \in
\mathbb{C}
,$ $f\in \mathcal{D}^{\prime }(%
\mathbb{Q}
_{p}^{n}),$ we have that $J^{\alpha }f=G^{\alpha }\ast f\in \mathcal{D}%
^{\prime }(%
\mathbb{Q}
_{p}^{n})$ and $J^{\alpha }(J^{\beta }f)=J^{\alpha +\beta }f.$ The map $%
\varphi \longrightarrow J^{\alpha }\varphi $ is a homeomorphism from $%
\mathcal{D}(%
\mathbb{Q}
_{p}^{n})$ onto $\mathcal{D}(%
\mathbb{Q}
_{p}^{n}).$ Furthermore $J^{\alpha }$ is continuous in $\alpha $ in the
sense that whenever $\{\alpha _{k}\}\rightarrow \alpha $ in $%
\mathbb{C}
$ then $J^{\alpha _{k}}\varphi \rightarrow J^{\alpha }\varphi ,$ when $%
\varphi \in \mathcal{D}(%
\mathbb{Q}
_{p}^{n}).$
\end{remark}

The $n$-dimensional $p-$adic gamma function $\Gamma _{p}^{n}$ is defined as
\begin{equation*}
\Gamma _{p}^{n}(\alpha )=\frac{1-p^{\alpha -n}}{1-p^{-\alpha }}\text{ for }%
\alpha \in
\mathbb{C}
\backslash \{0\}.
\end{equation*}%
Suppose $\alpha \in
\mathbb{C}
$ with $Re(\alpha )>0.$ Define on $%
\mathbb{Q}
_{p}^{n}$ and with values in $%
\mathbb{R}
_{+}$\ the function $K_{\alpha }$ as follows:%
\begin{equation}
K_{\alpha }(x)=\left\{
\begin{array}{c}
\frac{1}{\Gamma _{p}^{n}(\alpha )}\left( ||x||_{p}^{\alpha -n}-p^{\alpha
-n}\right) \Omega (||x||_{p})\text{ if }\alpha \neq n \\
\\
(1-p^{-n})\log _{p}(\frac{p}{||x||_{p}})\Omega (||x||_{p})\text{ \ \ \ \ \ \
\ if }\alpha =n%
\end{array}%
\right.  \label{Def_K_alpha}
\end{equation}

\begin{remark}
\label{Lemma 5.2 p-138-139} $(i)$ Note that $K_{\alpha }(x)$, $x\in
\mathbb{Q}
_{p}^{n},$ is a non-negative radial function. We have that $G^{\alpha
}=K_{\alpha },$ $K_{\alpha }\in L^{1}(%
\mathbb{Q}
_{p}^{n})$ and $\widehat{K}_{\alpha }(\xi )=(\max \{1,||\xi
||_{p}\})^{-\alpha }$, see \cite[Lemma 5.2-p. 138]{Taibleson}. Moreover, can
verify that\ $\int\nolimits_{%
\mathbb{Q}
_{p}^{n}}K_{\alpha }(x)d^{n}x=1,$ if $Re(\alpha )>0,$ see \cite[%
Remarks-p. 138 and (5.5)-p. 139]{Taibleson}.

$(ii)$ Since the function $f(x)=1,$ $x\in
\mathbb{Q}
_{p}^{n},$ is constant we have in particular that $f$ is locally constant.
Moreover, the function $||\cdot ||_{p}$ is also locally constant, see \cite[%
Example 1-p. 79]{V-V-Z}. Therefore the function $(\max \{1,||\xi
||_{p}\})^{-\alpha }$ is locally constant. \ \ \
\end{remark}

For our purposes from now on we consider fix $\alpha >n.$ Therefore,
\begin{equation}
\frac{1-p^{-\alpha }}{1-p^{\alpha -n}}<0.  \label{negative}
\end{equation}
Following the notation given in \cite{Taibleson} and taking into account
Remark \ref{Proposition 5.1-p-173} and Remark \ref{Lemma 5.2 p-138-139}, for
$\varphi \in \mathcal{D}(%
\mathbb{Q}
_{p}^{n})$ we define the pseudo-differential operator $J^{\alpha }$ by \ \ \
\begin{equation}
(J^{\alpha }\varphi )(x):=\mathcal{F}_{\xi \rightarrow x}^{-1}\left[
\widehat{K}_{\alpha }(\xi )\widehat{\varphi }(\xi )\right]
=\mathcal{F}
_{\xi \rightarrow x}^{-1}\left[ (\max \{1,||\xi ||_{p}\})^{-\alpha }\widehat{%
\varphi }(\xi )\right] ,\text{ }x\in
\mathbb{Q}
_{p}^{n},  \label{Def_J}
\end{equation}%
with symbol $\widehat{K}_{\alpha }(\xi )=(\max \{1,||\xi ||_{p}\})^{-\alpha
}.$

Nothe that if $\varphi \in \mathcal{D}(%
\mathbb{Q}
_{p}^{n})$ then $\widehat{\varphi }\in \mathcal{D}(%
\mathbb{Q}
_{p}^{n}),$ see \cite[Lemma 4.8.1]{V-V-Z}. Moreover, $supp(\widehat{K}%
_{\alpha }\widehat{\varphi })=supp(\widehat{\varphi }),$ so that by Remark %
\ref{Lemma 5.2 p-138-139}-$(ii)$ we have that $\widehat{K}_{\alpha }\widehat{%
\varphi }\in \mathcal{D}(%
\mathbb{Q}
_{p}^{n}).$ Therefore the operator $J^{\alpha }:\mathcal{D}(%
\mathbb{Q}
_{p}^{n})\rightarrow \mathcal{D}(%
\mathbb{Q}
_{p}^{n})$ is well defined, and by Remark \ref{Proposition 5.1-p-173} and
Remark \ref{Lemma 5.2 p-138-139}, we have that
\begin{equation*}
(J^{\alpha }\varphi )(x)=(K_{\alpha }\ast \varphi )(x),\text{ for }\varphi
\in \mathcal{D}(%
\mathbb{Q}
_{p}^{n}).
\end{equation*}

\begin{lemma}
\label{int_exp}For $t>0$ we have that
\begin{equation}
\int\nolimits_{%
\mathbb{Q}
_{p}^{n}}e^{-t\widehat{K}_{\alpha }(\xi )}\leq e^{-t}-1,  \label{cota e-t}
\end{equation}%
i.e. $e^{-t\widehat{K}_{\alpha }(\xi )}=e^{-t(\max \{1,||\xi
||_{p}\})^{-\alpha }}\in L^{1}(%
\mathbb{Q}
_{p}^{n}).$
\end{lemma}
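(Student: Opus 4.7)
The plan is to split $\mathbb{Q}_p^n$ along the level sets of $\max\{1,\|\xi\|_p\}$ and sum an explicit geometric series. First, the bound must really be read as a statement about the integrand $e^{-t\widehat{K}_\alpha(\xi)}-1$: since $\widehat{K}_\alpha(\xi)\in(0,1]$, the function $e^{-t\widehat{K}_\alpha(\xi)}$ is bounded below by $e^{-t}>0$ and so cannot lie in $L^1$ of the infinite-measure space $\mathbb{Q}_p^n$. The correct claim asserts $e^{-t\widehat{K}_\alpha(\xi)}-1\in L^1(\mathbb{Q}_p^n)$ together with the bound.

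First, I would decompose $\mathbb{Q}_p^n=\mathbb{Z}_p^n\sqcup\bigsqcup_{k=1}^{\infty}S_k^n$. On $\mathbb{Z}_p^n$ one has $\max\{1,\|\xi\|_p\}=1$, hence $\widehat{K}_\alpha(\xi)=1$; on $S_k^n$ with $k\ge 1$ one has $\max\{1,\|\xi\|_p\}=p^k$, hence $\widehat{K}_\alpha(\xi)=p^{-\alpha k}$. Combined with the standard Haar-measure evaluations $|\mathbb{Z}_p^n|=1$ and $|S_k^n|=p^{kn}-p^{(k-1)n}=(1-p^{-n})p^{kn}$, this reduces the integral to the closed-form
$$\int_{\mathbb{Q}_p^n}\bigl(e^{-t\widehat{K}_\alpha(\xi)}-1\bigr)\,d^n\xi\;=\;(e^{-t}-1)+(1-p^{-n})\sum_{k=1}^{\infty}p^{kn}\bigl(e^{-tp^{-\alpha k}}-1\bigr).$$

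Next, for absolute convergence I would invoke the elementary inequality $|e^{-u}-1|\le u$ valid for $u\ge 0$, giving $p^{kn}\bigl|e^{-tp^{-\alpha k}}-1\bigr|\le t\,p^{-(\alpha-n)k}$. Since $\alpha>n$ — this is exactly the point at which hypothesis \eqref{negative} is used — the series $\sum_{k\ge 1} p^{-(\alpha-n)k}$ is a convergent geometric series, so the tail sum is finite and $e^{-t\widehat{K}_\alpha}-1$ is indeed integrable.

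Finally, for the upper bound itself, I would simply note that for $t>0$ and every $k\ge 1$ the factor $e^{-tp^{-\alpha k}}-1$ is strictly negative, so the whole tail sum is non-positive, and the integral is bounded above by the boundary term $e^{-t}-1$. The only real obstacle is the convergence of the geometric series, which is forced exactly by $\alpha>n$; everything else is direct bookkeeping with the $p$-adic sphere measures.
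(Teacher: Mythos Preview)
Your decomposition into $\mathbb{Z}_p^n$ and the spheres $S_k^n$ is exactly the paper's starting point, and your observation that the lemma as literally stated cannot hold --- since $e^{-t\widehat{K}_\alpha(\xi)}\ge e^{-t}>0$ on a space of infinite Haar measure --- is correct; the intended integrand must be $e^{-t\widehat{K}_\alpha(\xi)}-1$.

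Where you diverge from the paper is in the convergence step. The paper does \emph{not} subtract $1$: it writes the integral of $e^{-t\widehat{K}_\alpha}$ itself as $e^{-t}+(1-p^{-n})\sum_{j\ge 1}e^{-tp^{-j\alpha}}p^{nj}$, bounds $e^{-tp^{-j\alpha}}\le 1$, and then formally telescopes $\sum_{j\ge 1}(p^{nj}-p^{n(j-1)})$ to $-1$. That last series is divergent (the partial sums are $p^{nN}-1\to+\infty$), so the argument is not rigorous as written; it only becomes meaningful after one subtracts $1$ termwise, which is precisely what you do. Your route via the elementary bound $|e^{-u}-1|\le u$ and the geometric series $\sum_{k\ge 1}p^{-(\alpha-n)k}$ is the clean way to justify absolute convergence, and it makes explicit exactly where the hypothesis $\alpha>n$ enters --- something the paper's computation never invokes. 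Once convergence is in hand, your final inequality (dropping the non-positive tail) coincides with the paper's intended conclusion.
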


\begin{proof}
Since $e^{-tp^{-j\alpha }}\leq 1$ for $j\geq 1,$ we have that
\begin{eqnarray*}
\int\nolimits_{%
\mathbb{Q}
_{p}^{n}}e^{-t\widehat{K}_{\alpha }(\xi )}d^{n}\xi &=&\int\nolimits_{%
\mathbb{Q}
_{p}^{n}}e^{-t(\max \{1,||\xi ||_{p}\})^{-\alpha }}d^{n}\xi \\
&=&e^{-t}+(1-p^{-n})\sum_{j=1}^{\infty }e^{-tp^{-j\alpha }}p^{nj} \\
&\leq &e^{-t}+\sum_{j=1}^{\infty }(p^{nj}-p^{n(j-1)})=e^{-t}-1.
\end{eqnarray*}
\end{proof}

The proof of the following Lemma is similar to the one given in \cite[%
Proposition 1]{To-Z}.\ \ \

\begin{lemma}
Consider the Cauchy problem%
\begin{equation}
\left\{
\begin{array}{ll}
\frac{\partial u}{\partial t}(x,t)=J^{\alpha }u(x,t)\text{,} & t\in \left[
0,\infty \right) \text{,\ }x\in
\mathbb{Q}
_{p}^{n} \\
&  \\
u(x,0)=u_{0}(x)\in \mathcal{D}(%
\mathbb{Q}
_{p}^{n})\text{.} &
\end{array}%
\right.  \label{Cauchy_Problem}
\end{equation}%
Then%
\begin{equation*}
u(x,t)=\int_{\mathbf{%
\mathbb{Q}
}_{p}^{n}}\chi _{p}\left( -\xi \cdot x\right) e^{-t(\max \{1,||\xi
||_{p}\})^{-\alpha }}\widehat{u_{0}}(\xi )d^{n}\xi
\end{equation*}%
is a classical solution of (\ref{Cauchy_Problem}). In addition, $u(\cdot ,t)$
is a continuous function for any $t\geq 0$.
\end{lemma}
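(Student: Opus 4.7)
The plan is to verify directly that the proposed formula
\begin{equation*}
u(x,t)=\int_{\mathbb{Q}_p^n}\chi_p(-\xi\cdot x)\,e^{-t\widehat{K}_\alpha(\xi)}\,\widehat{u_0}(\xi)\,d^n\xi
\end{equation*}
is a classical solution by (a) showing the integral is well defined, (b) checking the initial condition via Fourier inversion, (c) differentiating under the integral to recover the equation, and (d) verifying continuity of $u(\cdot,t)$ in $x$. Since $u_0\in\mathcal{D}(\mathbb{Q}_p^n)$, by \cite[Lemma 4.8.1]{V-V-Z} its Fourier transform $\widehat{u_0}$ again lies in $\mathcal{D}(\mathbb{Q}_p^n)$, so is bounded with compact support contained in some ball $B_k^n$. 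Combined with $|\chi_p|=1$ and $0<e^{-t\widehat{K}_\alpha(\xi)}\le 1$ for $t\ge 0$, this dominates the integrand by $\|\widehat{u_0}\|_\infty 1_{B_k^n}$, an integrable function. Hence $u(x,t)$ is a well-defined function of $(x,t)\in\mathbb{Q}_p^n\times[0,\infty)$.

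The initial condition is immediate: at $t=0$ the exponential factor equals $1$, so Fourier inversion on $\mathcal{D}(\mathbb{Q}_p^n)$ gives $u(x,0)=u_0(x)$. To check the PDE, I would differentiate under the integral. The $t$-partial derivative of the integrand is $-\chi_p(-\xi\cdot x)\widehat{K}_\alpha(\xi)e^{-t\widehat{K}_\alpha(\xi)}\widehat{u_0}(\xi)$, again dominated on $B_k^n$ by $\|\widehat{u_0}\|_\infty$ (since $0\le\widehat{K}_\alpha\le 1$), so dominated convergence justifies interchanging $\partial_t$ with the integral. The resulting expression equals $\mathcal{F}^{-1}_{\xi\to x}[\widehat{K}_\alpha(\xi)\widehat{u}(\xi,t)]$ up to a sign, which by the definition (\ref{Def_J}) of $J^\alpha$ is identified with the right-hand side of (\ref{Cauchy_Problem}) (once sign conventions are reconciled between the Fourier multiplier and the exponent).

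For (d), if $x'\to x$ in $\mathbb{Q}_p^n$, then $\chi_p(-\xi\cdot x')\to\chi_p(-\xi\cdot x)$ pointwise by continuity of the additive character $\chi_p$, while the integrand remains dominated by $\|\widehat{u_0}\|_\infty 1_{B_k^n}$; dominated convergence then yields $u(x',t)\to u(x,t)$, proving continuity of $u(\cdot,t)$ for each $t\ge 0$. The proof presents no essential obstacle: the compact support of $\widehat{u_0}$ trivializes every domination argument, so Lemma~\ref{int_exp} (which would be needed in the absence of such compactness) is not actually invoked here. The only bookkeeping required is tracking the sign produced by differentiating $e^{-t\widehat{K}_\alpha(\xi)}$ and matching it against the convention chosen in (\ref{Cauchy_Problem}).
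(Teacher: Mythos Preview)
Your approach is correct and is exactly the standard direct verification via Fourier analysis that the paper has in mind; the paper itself does not spell out a proof but only remarks that it ``is similar to the one given in \cite[Proposition 1]{To-Z},'' which proceeds along the same lines you describe (compact support of $\widehat{u_0}$ trivializes all domination issues, then differentiate under the integral). The sign discrepancy you flag is real: with $J^{\alpha}$ defined by the positive multiplier $\widehat{K}_\alpha$, the displayed formula actually solves $\partial_t u = -J^{\alpha}u$, consistent with the operator $-J^{\alpha}$ studied throughout Sections~\ref{Section_3}--\ref{Section_4}, so the equation in (\ref{Cauchy_Problem}) carries a typographical sign error rather than a flaw in your argument.
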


We define the heat Kernel attached to operator $J^{\alpha }$ as
\begin{equation*}
Z(x,t):=\mathcal{F}_{\xi \rightarrow x}^{-1}(e^{-t(\max \{1,||\xi
||_{p}\})^{-\alpha }}).
\end{equation*}%
When considering $Z(x,t)$ as a function of $x$ for $t$ fixed, we will write $%
Z_{t}(x).$ On the other hand, by Lemma \ref{int_exp} and , \cite[Chapter
III-Theorem 1.1-(b)]{Taibleson} we have that $Z_{t}(x)$, $t>0$, is uniformly
continuous. \ \ \

\begin{theorem}
\label{Theorem_Z(x,t)}For any $t>0$, the heat Kernel has the following
properties:

$(i)$ $Z(x,t)=\left\{
\begin{array}{lll}
\sum_{i=0}^{\gamma }p^{in}(e^{-tp^{-i\alpha }}-e^{-tp^{-(i+1)\alpha }})<0 &
\text{if} & \text{ }||x||_{p}=p^{-\gamma },\text{ }\gamma \geq 0, \\
&  &  \\
0, & \text{if} & \text{ }||x||_{p}=p^{\gamma },\text{ }\gamma >1,%
\end{array}%
\right. $

i.e., $Z(x,t)\leq 0$ and $supp(Z(x,t))=%
\mathbb{Z}
_{p}^{n},$ for any $t>0$ and $x\in
\mathbb{Q}
_{p}^{n}.$

$(ii)$ $\int_{%
\mathbb{Q}
_{p}^{n}}Z(x,t)d^{n}x=e^{-t}$ with $x\in
\mathbb{Q}
_{p}^{n}.$

$(iii)$ $Z_{t}(x)\ast Z_{t_{0}}(x)=Z_{t+t_{0}}(x),$ for any $t_{0}>0.$

$(iv)$ $\lim_{t\rightarrow 0^{+}}Z(x,t)=\delta (x).$
\end{theorem}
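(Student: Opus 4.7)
The plan is to handle the four items separately, with the explicit formula in (i) carrying most of the work and the remaining items following by standard Fourier manipulations. The key observation is that the symbol $e^{-t\widehat{K}_\alpha(\xi)}$ is radial and takes a constant value on each sphere of $\mathbb{Q}_p^n$, so the inverse Fourier transform reduces to combinatorial sums.

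For (i), I would expand the defining integral
\[ Z(x,t) = \int_{\mathbb{Q}_p^n}\chi_p(-\xi\cdot x)\,e^{-t(\max\{1,\|\xi\|_p\})^{-\alpha}}\,d^n\xi \]
by decomposing $\mathbb{Q}_p^n = \mathbb{Z}_p^n\sqcup\bigsqcup_{j\geq 1}S_j^n$, on which the symbol equals $e^{-t}$ and $e^{-tp^{-j\alpha}}$ respectively. Using the standard identity $\int_{B_r^n}\chi_p(-\xi\cdot x)\,d^n\xi = p^{rn}\Omega(p^r\|x\|_p)$, the sphere integral becomes $p^{jn}\Omega(p^j\|x\|_p)-p^{(j-1)n}\Omega(p^{j-1}\|x\|_p)$. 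For $x\neq 0$ with $\|x\|_p=p^{-\gamma}$, the factor $\Omega$ forces all but finitely many terms to vanish (those with $j\leq \gamma+1$), so the infinite series collapses to a finite sum, neatly sidestepping the non-integrability of the symbol. After reindexing and combining, the expression telescopes to $\sum_{i=0}^{\gamma}p^{in}(e^{-tp^{-i\alpha}}-e^{-tp^{-(i+1)\alpha}})$; strict negativity of each summand is immediate because $p^{-\alpha}<1$, and for $\|x\|_p>1$ every $\Omega$ vanishes, whence $\operatorname{supp}Z(\cdot,t)\subset\mathbb{Z}_p^n$.

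Items (ii)--(iv) then reduce to Fourier duality. For (ii), one obtains $\int Z_t\,d^n x = \widehat{Z_t}(0) = e^{-t\widehat{K}_\alpha(0)} = e^{-t}$; equivalently, integrating the pointwise formula of (i) against Haar measure with sphere volumes $p^{-\gamma n}(1-p^{-n})$, swapping the order of summation, and telescoping gives the same answer. For (iii), the semigroup property follows from
\[ \mathcal{F}(Z_t * Z_{t_0}) = e^{-t\widehat{K}_\alpha}\cdot e^{-t_0\widehat{K}_\alpha} = e^{-(t+t_0)\widehat{K}_\alpha} = \mathcal{F}(Z_{t+t_0}), \]
with the convolution well defined because $Z_t$ has compact support in $\mathbb{Z}_p^n$. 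For (iv), pair with $\varphi\in\mathcal{D}(\mathbb{Q}_p^n)$ and pass to the Fourier side:
\[ (Z_t,\varphi) = \int_{\mathbb{Q}_p^n} e^{-t\widehat{K}_\alpha(\xi)}\,\check\varphi(\xi)\,d^n\xi \xrightarrow[t\to 0^+]{} \int_{\mathbb{Q}_p^n}\check\varphi(\xi)\,d^n\xi = \varphi(0) = (\delta,\varphi), \]
by dominated convergence, since $\check\varphi=\mathcal{F}^{-1}\varphi\in\mathcal{D}$ has compact support and $0<e^{-t\widehat{K}_\alpha}\leq 1$.

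The main technical subtlety is that $e^{-t\widehat{K}_\alpha}$ is not integrable on $\mathbb{Q}_p^n$ (it tends to $1$ on spheres of large radius), so $Z_t$ must be understood as a distribution of the form $\delta + W_t$ where $W_t = \mathcal{F}^{-1}(e^{-t\widehat{K}_\alpha}-1)$ is genuinely in $L^1$: the bound $|e^{-t\widehat{K}_\alpha(\xi)}-1|\leq t(\max\{1,\|\xi\|_p\})^{-\alpha}$ combined with $\alpha>n$ is precisely what forces the corrected symbol to be integrable. In this picture the explicit formula from (i) is the value of $W_t$ off the origin, and the delta piece supplies the missing $+1$ so that $\int Z_t = 1 + (e^{-t}-1) = e^{-t}$ in (ii). Keeping track of this splitting is where the bookkeeping is most delicate; once it is made explicit, all of the manipulations above are routine.
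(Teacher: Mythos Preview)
Your approach for all four parts mirrors the paper's: the same sphere-by-sphere decomposition for (i), and the same Fourier-side arguments for (ii)--(iv). Your use of the ball-indicator identity $\int_{B_r^n}\chi_p(-\xi\cdot x)\,d^n\xi = p^{rn}\Omega(p^r||x||_p)$ to write the sphere integrals and then reindex is slightly slicker than the paper's route, which instead records the three-case formula for $\int_{||w||_p=1}\chi_p(-p^{-j}x\cdot w)\,d^nw$ and then verifies the closed form by induction on $\gamma$; both reach the same expression.

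The substantive difference is your final paragraph. You correctly observe that $e^{-t\widehat K_\alpha}$ is \emph{not} in $L^1(\mathbb{Q}_p^n)$, since it tends to $1$ on spheres of large radius, and that $Z_t$ must therefore be read as the distribution $\delta + W_t$ with $W_t=\mathcal{F}^{-1}(e^{-t\widehat K_\alpha}-1)\in L^1$, the subtraction restoring integrability precisely because $\alpha>n$. The paper takes the opposite view: its Lemma~\ref{int_exp} asserts $\int_{\mathbb{Q}_p^n} e^{-t\widehat K_\alpha}\,d^n\xi \le e^{-t}-1$ by telescoping the divergent series $\sum_{j\ge1}(p^{nj}-p^{n(j-1)})$ to $-1$, and then invokes this in the proof of (ii) to conclude $Z_t\in L^1$. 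That step is not valid, and indeed the strictly negative pointwise values from (i) on $\mathbb{Z}_p^n\setminus\{0\}$ are incompatible with a positive total mass $e^{-t}$ unless a point mass at the origin is present. Your $\delta+W_t$ splitting is exactly what is needed to reconcile (i) with (ii), so your proposal is both aligned with the paper's overall strategy and more careful at the one place where the paper's argument breaks down.
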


\begin{proof}
$(i)$ For $x\in
\mathbb{Q}
_{p}^{n}$ and $t>0,$ we have by (\ref{Def_J}) that \ \
\begin{equation}
Z(x,t)=e^{-t}\int_{%
\mathbb{Z}
_{p}^{n}}\chi _{p}(-x\cdot \xi )d^{n}\xi +\int_{%
\mathbb{Q}
_{p}^{n}\backslash
\mathbb{Z}
_{p}^{n}}\chi _{p}(-x\cdot \xi )e^{-t||\xi ||_{p}^{-\alpha }}d^{n}\xi .
\label{exp_Z(x,t)}
\end{equation}%
Consider the following cases:

If $||x||_{p}=p^{\gamma },$ with $\gamma \geq 1,$ then by (\ref{exp_Z(x,t)})
and the $n$-dimensional version of \cite[Example 6-p. 42]{V-V-Z}, we have
that%
\begin{eqnarray*}
Z(x,t) &=&\int_{%
\mathbb{Q}
_{p}^{n}\backslash
\mathbb{Z}
_{p}^{n}}\chi _{p}(-x\cdot \xi )e^{-t||\xi ||_{p}^{-\alpha }}d^{n}\xi \\
&=&\sum_{j=1}^{\infty }e^{-tp^{-j\alpha }}\int_{||p^{j}\xi ||_{p}=1}\chi
_{p}(-x\cdot \xi )d^{n}\xi \\
&=&\sum_{j=1}^{\infty }e^{-tp^{-j\alpha }}p^{nj}\int_{||w||_{p}=1}\chi
_{p}(-p^{-j}x\cdot w)d^{n}w\text{ \ }(\text{taking }w=p^{j}\xi \text{ }).
\end{eqnarray*}%
By using the formula
\begin{equation*}
\int_{||w||_{p}=1}\chi _{p}\left( -p^{-j}x\cdot w\right) d^{n}w=\left\{
\begin{array}{lll}
1-p^{-n}, & \text{if} & \text{ }j\leq -\gamma , \\
-p^{-n}, & \text{if } & \text{\ }j=-\gamma +1, \\
0, & \text{if} & \ j\geq -\gamma +2,%
\end{array}%
\right.
\end{equation*}%
we get that $Z(x,t)=0.$

On the other hand, if $||x||_{p}=p^{-\gamma },$ $\gamma \geq 0,$ then by (%
\ref{exp_Z(x,t)}) and the $n$-dimensional version of \cite[Example 6-p. 42]%
{V-V-Z} we have that
\begin{eqnarray}
Z(x,t) &=&e^{-t}+\sum_{j=1}^{\infty }e^{-tp^{-j\alpha }}\int_{||p^{j}\xi
||_{p}=1}\chi _{p}(-x\cdot \xi )d^{n}\xi  \notag \\
&=&e^{-t}+\sum_{j=1}^{\infty }e^{-tp^{-j\alpha
}}p^{nj}\int_{||w||_{p}=1}\chi _{p}(-p^{-j}x\cdot w)d^{n}w\text{ \ }(\text{%
taking }w=p^{j}\xi \text{ }).  \label{Z(x,t)=}
\end{eqnarray}%
Now, we have that%
\begin{equation}
\int_{||w||_{p}=1}\chi _{p}\left( -p^{-j}x\cdot w\right) d^{n}w=\left\{
\begin{array}{lll}
1-p^{-n}, & \text{if} & \text{ }j\leq \gamma , \\
-p^{-n}, & \text{if } & \text{\ }j=\gamma +1, \\
0, & \text{if} & \ j\geq \gamma +2.%
\end{array}%
\right.  \label{Formule}
\end{equation}

We proceed by induction on $\gamma .$ Note that if $\gamma =0,$ then by (\ref%
{Z(x,t)=}) and (\ref{Formule}) we have that
\begin{equation*}
Z(x,t)=e^{-t}-e^{-tp^{-\alpha }}.
\end{equation*}%
If $\gamma =1,$ then by (\ref{Z(x,t)=}) and (\ref{Formule}) we have that%
\begin{eqnarray*}
Z(x,t) &=&e^{-t}+(p^{n}-1)e^{-tp^{-\alpha }}-p^{n}e^{-tp^{-2\alpha }} \\
&=&(e^{-t}-e^{-tp^{-\alpha }})+p^{n}(e^{-tp^{-\alpha }}-e^{-tp^{-2\alpha }}).
\end{eqnarray*}%
Suppose that
\begin{equation*}
Z(x,t)=\sum_{i=0}^{n}p^{in}(e^{-tp^{-i\alpha }}-e^{-tp^{-(i+1)\alpha }})
\end{equation*}%
is satisfied for $\gamma =n.$

Let's see if the hypothesis is met for $\gamma =n+1.$ By (\ref{Z(x,t)=}) and
(\ref{Formule}) we have that%
\begin{align*}
Z(x,t) &=e^{-t}+(1-p^{-n})e^{-tp^{-\alpha
}}p^{n}+(1-p^{-n})e^{-tp^{-2\alpha }}p^{2n}+\ldots \\
&+(1-p^{-n})e^{-tp^{-\gamma \alpha }}p^{n\alpha
}+(1-p^{-n})e^{-tp^{-(\gamma +1)\alpha }}p^{(\gamma +1)n} \\
&-p^{-n}e^{-tp^{-(\gamma +2)\alpha }}p^{(\gamma +2)n} \\
&=e^{-t}+(p^{n}-1)e^{-tp^{-\alpha }}+(p^{2n}-p^{n})e^{-tp^{-2\alpha
}}+\ldots +(p^{n\alpha }-p^{n(\alpha -1)})e^{-tp^{-\gamma \alpha }} \\
&+(p^{(\gamma +1)n}-p^{n\gamma })e^{-tp^{-(\gamma +1)\alpha }}-p^{(\gamma
+1)n}e^{-tp^{-(\gamma +2)\alpha }}.
\end{align*}%
So by the hypothesis of induction we have that
\begin{align*}
Z(x,t) &=(e^{-t}-e^{-tp^{-\alpha }})+p^{n}(e^{-tp^{-\alpha
}}-e^{-tp^{-2\alpha }})+p^{2n}(e^{-tp^{-2\alpha }}-e^{-tp^{-3\alpha
}})+\ldots \\
&+p^{\gamma n}(e^{-tp^{-\gamma \alpha }}-e^{-tp^{-(\gamma +1)\alpha
}})+p^{(\gamma +1)n}(e^{-tp^{-(\gamma +1)\alpha }}-e^{-tp^{-(\gamma
+2)\alpha }}).
\end{align*}%
Therefore, taking into account that the function $f(x)=e^{-tp^{-x\alpha }}$
is an increasing function in the real variable $x,$ we have that
\begin{equation*}
Z(x,t)\leq 0,\text{ for any }t>0\text{ and }x\in
\mathbb{Q}
_{p}^{n}.
\end{equation*}

$(ii)$ Let $t>0.$ By the definition of $Z(x,t),$ $(i)$ and (\ref{cota e-t}),
we have that $|Z(x,t)|\leq e^{-t}-1.$ Therefore,
\begin{equation}
Z(x,t)\in L^{1}(%
\mathbb{Q}
_{p}^{n}).  \label{Z(x,t)_int}
\end{equation}

Since $\widehat{Z}(x,t)=e^{-t(\max \{1,||\xi ||_{p}\})^{-\alpha }}$ we have
that $\widehat{Z}(0,t)=e^{-t}.$ On the other hand, $\widehat{Z}(\xi
,t)=\int\nolimits_{%
\mathbb{Q}
_{p}^{n}}\chi _{p}(\xi \cdot x)Z(x,t)d^{n}x$ and $\widehat{Z}%
(0,t)=\int\nolimits_{%
\mathbb{Q}
_{p}^{n}}Z(x,t)d^{n}x.$ Therefore, $\int\nolimits_{%
\mathbb{Q}
_{p}^{n}}Z(x,t)d^{n}x=e^{-t}.$

$(iii)$ It is an immediate consequence of the definition of $Z(x,t)$\ and (%
\ref{Z(x,t)_int}).

$(iv)$ For $\varphi \in \mathcal{D}(%
\mathbb{Q}
_{p}^{n})$ we have that
\begin{align*}
\lim_{t\rightarrow 0^{+}}\left\langle Z_{t}(x),\varphi \right\rangle
&=\lim_{t\rightarrow 0^{+}}\left\langle e^{-t(\max \{1,||\xi
||_{p}\})^{-\alpha }},\mathcal{F}_{\xi \rightarrow x}^{-1}(\varphi
)\right\rangle =\left\langle 1,\mathcal{F}_{\xi \rightarrow
x}^{-1}(\varphi
)\right\rangle \\
&=\left\langle \delta ,\varphi \right\rangle .
\end{align*}
\end{proof}

\begin{remark}
\label{convolution_semigroups_nega}$(i)$ By the previous theorem, we have
that the family $(Z_{t})_{t>0}$ it's not a convolution semigroup on $%
\mathbb{Q}
_{p}^{n},$ see e.g. \cite{Berg-Gunnar}.

$(ii)$ By $(ii)$ in the previous theorem we have that the family of
operators
\begin{equation*}
(\Theta (t)f)(x):=\int\nolimits_{%
\mathbb{Q}
_{p}^{n}}Z(x-y,t)f(y)d^{n}y
\end{equation*}%
not preserve the function $f(x)\equiv 1.$ Thus $\Theta (t)$ it's not a
Markov semigroup and moreover the fundamental solution $Z(x,t)$ it's not a
transition density of a Markov process. For more details, the reader can
consult the theory of Markov processes, see e.g. \cite{Dyn}, \cite%
{Kochubei-2001}.\ \

$(iii)$ By Theorem \ref{Theorem_Z(x,t)}-$(ii)$ and Fubini%
\'{}%
s theorem, we have that the classical solution of (\ref{Cauchy_Problem}) can
be written as
\begin{equation*}
u(x,t)=Z_{t}(x)\ast u_{0}(x),\text{ }t\geq 0,\text{ }x\in
\mathbb{Q}
_{p}^{n}.
\end{equation*}
\ \ \
\end{remark}

\section{\label{Section_3}The Positive Maximum Principle and\textit{\ }%
Strongly Continuous, Positive, Contraction Semigroup On $C_{0}(%
\mathbb{Q}
_{p}^{n})$}

In this section, we will show that the operator $-J^{\alpha }$ satisfies the
positive maximum principle on $C_{0}(%
\mathbb{Q}
_{p}^{n})$ and that also the closure $\overline{-J^{\alpha }}$ of the
operator $-J^{\alpha }$ on $C_{0}(%
\mathbb{Q}
_{p}^{n})$ is single-valued and generates a strongly continuous, positive,
contraction semigroup $\left\{ T(t)\right\} $ on $C_{0}(%
\mathbb{Q}
_{p}^{n}).$ For more details, the reader can consult \cite{Berg-Gunnar},
\cite{St-Th}. \ \

\begin{definition}
An operator $(A,Dom(A))$ on $C_{0}(\mathbf{%
\mathbb{Q}
}_{p}^{n})$ is said to satisfy the \textit{positive maximum principle} if
whenever $f\in Dom(A)\subseteq C_{0}(\mathbf{%
\mathbb{Q}
}_{p}^{n},\mathbb{R})$, $x_{0}\in \mathbf{%
\mathbb{Q}
}_{p}^{n}$, and $\sup_{x\in \mathbf{%
\mathbb{Q}
}_{p}^{n}}f(x)=f(x_{0})\geq 0$ we have $Af(x_{0})\leq 0$.
\end{definition}

Let $x_{0}\in
\mathbb{Q}
_{p}^{n}$ such that $\sup_{x\in \mathbf{%
\mathbb{Q}
}_{p}^{n}}\varphi (x)=\varphi (x_{0})\geq 0$ with $\varphi \in \mathcal{D}(%
\mathbb{Q}
_{p}^{n}).$ By Remark \ref{Proposition 5.1-p-173}, Remark \ref{Lemma 5.2
p-138-139} and (\ref{Def_K_alpha}) we have that
\begin{eqnarray}
(J^{\alpha }\varphi )(x_{0}) &=&(K_{\alpha }\ast \varphi )(x_{0})  \notag \\
&=&\frac{1-p^{-\alpha }}{1-p^{\alpha -n}}\int\nolimits_{%
\mathbb{Q}
_{p}^{n}}\left( ||x_{0}||_{p}^{\alpha -n}-p^{\alpha -n}\right) \Omega
(||x_{0}||_{p})\varphi (x_{0}-y)d^{n}y  \label{JA_rep} \\
&=&\frac{1-p^{-\alpha }}{1-p^{\alpha -n}}\int\nolimits_{%
\mathbb{Q}
_{p}^{n}}\left( ||x_{0}-y||_{p}^{\alpha -n}-p^{\alpha -n}\right) \Omega
(||x_{0}-y||_{p})\varphi (y)d^{n}y.  \label{JA_rep2}
\end{eqnarray}%
Consider the following cases:

\begin{case}
$\varphi (x_{0})>0.$ In this case $x_{0}\in supp(\varphi )$ and for all $%
x\in supp(\varphi )$ we have that $\varphi (x)>0.$

If $supp(\varphi )\cap
\mathbb{Z}
_{p}^{n}=\phi ,$ then $||x_{0}||_{p}>p.$ So that by (\ref{JA_rep}) we have
that
\begin{equation*}
(J^{\alpha }\varphi )(x_{0})=0.
\end{equation*}

If $supp(\varphi )\subseteq
\mathbb{Z}
_{p}^{n}$, then $||x_{0}||_{p}=p^{-\beta },$ with $\beta \geq 0.$ For the
case where $||y||_{p}>1$ we have that $||x_{0}-y||_{p}=||y||_{p}$ and
consequently $x_{0}-y\notin
\mathbb{Z}
_{p}^{n}$ and $\varphi (x_{0}-y)=0.$ On the other hand, if $||y||_{p}\leq 1$
then $||x_{0}-y||_{p}\leq 1,$ and in this case $\varphi (x_{0}-y)>0.$ So
that by (\ref{JA_rep}) and (\ref{negative}) we have that\
\begin{equation*}
(J^{\alpha }\varphi )(x_{0})=\frac{1-p^{-\alpha }}{1-p^{\alpha -n}}%
\int\nolimits_{%
\mathbb{Z}
_{p}^{n}}\left( p^{-\beta (\alpha -n)}-p^{\alpha -n}\right) \varphi
(x_{0}-y)d^{n}y\geq 0.
\end{equation*}

If $%
\mathbb{Z}
_{p}^{n}\subseteq supp(\varphi ),$ then there are two possibilities for $%
x_{0}.$ For the case when $||x_{0}||_{p}\leq 1$ we have that $%
||x_{0}||_{p}^{\alpha -n}-p^{\alpha -n}\leq 0$ and $\varphi (x_{0}-y)>0$ for
all $y\in supp(\varphi ).$ So that by (\ref{JA_rep}) and (\ref{negative}),
we have that $(J^{\alpha }\varphi )(x_{0})\geq 0.$ For the case when $%
x_{0}\in supp(\varphi )\backslash
\mathbb{Z}
_{p}^{n}$, by (\ref{JA_rep}) we have that $(J^{\alpha }\varphi )(x_{0})=0.$
\end{case}

\begin{case}
$\varphi (x_{0})=0.$ In this case $x_{0}\notin supp(\varphi )$ and for all $%
x\in supp(\varphi )$ we have that $\varphi (x)<0.$ \

If $supp(\varphi )\subseteq
\mathbb{Z}
_{p}^{n}$, then $||x_{0}||_{p}>1,$ or if $supp(\varphi )\subseteq
\mathbb{Q}
_{p}^{n}\backslash
\mathbb{Z}
_{p}^{n}$ and $x_{0}\notin
\mathbb{Z}
_{p}^{n}$, then by (\ref{JA_rep}) we have that $(J^{\alpha }\varphi
)(x_{0})=0.$

If $%
\mathbb{Z}
_{p}^{n}\subset supp(\varphi )$ and $x_{0}\notin
\mathbb{Z}
_{p}^{n}$ then by (\ref{JA_rep}) we have that $(J^{\alpha }\varphi
)(x_{0})=0,$ and if $%
\mathbb{Z}
_{p}^{n}\subset supp(\varphi )$ and $x_{0}\in
\mathbb{Z}
_{p}^{n}$ then by (\ref{JA_rep2}) we have that $(J^{\alpha }\varphi
)(x_{0})=0.$

If $supp(\varphi )\subset
\mathbb{Q}
_{p}^{n}\backslash
\mathbb{Z}
_{p}^{n}$ and $x_{0}\in
\mathbb{Z}
_{p}^{n}.$ Then, when $y\in
\mathbb{Q}
_{p}^{n}\backslash
\mathbb{Z}
_{p}^{n}$, we have that $||x_{0}-y||_{p}^{\alpha -n}=||y||_{p}^{\alpha -n}$
and $\Omega (||x_{0}-y||_{p})=0.$ Moreover, if $y\in
\mathbb{Z}
_{p}^{n}$ then $\varphi (y)=0.$ Therefore, by (\ref{JA_rep2}) we have that $%
(J^{\alpha }\varphi )(x_{0})=0.$
\end{case}

From all the above we have shown the following theorem.

\begin{theorem}
\label{maximum_principle}The operator
\begin{equation*}
(-J^{\alpha }\varphi )(x)=-\mathcal{F}_{\xi \rightarrow
x}^{-1}\left[ (\max \{1,||\xi ||_{p}\})^{-\alpha }\widehat{\varphi
}(\xi )\right] ,\text{ }x\in
\mathbb{Q}
_{p}^{n},\text{ }\varphi \in \mathcal{D}(%
\mathbb{Q}
_{p}^{n}),
\end{equation*}%
satisfies the \textit{positive maximum principle }on $C_{0}(%
\mathbb{Q}
_{p}^{n}).$
\end{theorem}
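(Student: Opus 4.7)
The plan is to reduce the statement to a sign assertion about a convolution and then handle it by a case analysis on the relative positions of $x_{0}$, $\mathrm{supp}(\varphi)$, and $\mathbb{Z}_{p}^{n}$. Given $\varphi \in \mathcal{D}(\mathbb{Q}_{p}^{n})$ with global maximum $\varphi(x_{0}) \geq 0$, I invoke Remark \ref{Proposition 5.1-p-173} and Remark \ref{Lemma 5.2 p-138-139} to write
\begin{equation*}
(-J^{\alpha}\varphi)(x_{0}) = -(K_{\alpha} \ast \varphi)(x_{0}),
\end{equation*}
so that it suffices to prove $(K_{\alpha}\ast\varphi)(x_{0}) \geq 0$. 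Two inputs then drive everything: the factor $\Omega(\|\cdot\|_{p})$ in (\ref{Def_K_alpha}) confines $\mathrm{supp}(K_{\alpha})$ to $\mathbb{Z}_{p}^{n}$, and the inequality (\ref{negative}), which uses $\alpha>n$, ensures the prefactor $\frac{1-p^{-\alpha}}{1-p^{\alpha-n}}$ is strictly negative.

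I then split into the two cases $\varphi(x_{0})>0$ and $\varphi(x_{0})=0$. In each, I further subdivide according to whether $\mathrm{supp}(\varphi)$ lies inside $\mathbb{Z}_{p}^{n}$, is disjoint from $\mathbb{Z}_{p}^{n}$, or contains $\mathbb{Z}_{p}^{n}$, and whether $x_{0}\in\mathbb{Z}_{p}^{n}$. In most of the resulting sub-configurations, the ultrametric identity $\|x_{0}-y\|_{p}=\max\{\|x_{0}\|_{p},\|y\|_{p}\}$ (valid when the two norms differ) combined with $\mathrm{supp}(K_{\alpha})\subseteq \mathbb{Z}_{p}^{n}$ forces the integrand in (\ref{JA_rep}) or (\ref{JA_rep2}) to vanish identically, giving $(J^{\alpha}\varphi)(x_{0})=0$ at once.

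The only sub-case demanding an actual inequality is $\varphi(x_{0})>0$ with $\mathrm{supp}(\varphi)\subseteq \mathbb{Z}_{p}^{n}$ and $\|x_{0}\|_{p}=p^{-\beta}$, $\beta\geq 0$. Here (\ref{JA_rep}) reduces the convolution to an integral over $\mathbb{Z}_{p}^{n}$ of
\begin{equation*}
\frac{1-p^{-\alpha}}{1-p^{\alpha-n}}\bigl(p^{-\beta(\alpha-n)}-p^{\alpha-n}\bigr)\varphi(x_{0}-y),
\end{equation*}
and since $\alpha>n$, $\beta\geq 0$, the middle factor is negative, the prefactor is negative by (\ref{negative}), and $\varphi(x_{0}-y)>0$ throughout the domain of integration, so the integrand is non-negative. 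The sibling case with $\mathbb{Z}_{p}^{n}\subseteq \mathrm{supp}(\varphi)$ and $\|x_{0}\|_{p}\leq 1$ is handled by the same sign computation. The case $\varphi(x_{0})=0$ is easier: then $\varphi\leq 0$ everywhere, and the analogous bookkeeping via (\ref{JA_rep2}) shows the integrand is identically zero in every configuration (for instance when $\mathrm{supp}(\varphi)\subseteq \mathbb{Q}_{p}^{n}\setminus\mathbb{Z}_{p}^{n}$ and $x_{0}\in\mathbb{Z}_{p}^{n}$, one uses $\Omega(\|x_{0}-y\|_{p})=0$ outside $\mathbb{Z}_{p}^{n}$ and $\varphi\equiv 0$ on $\mathbb{Z}_{p}^{n}$).

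The real obstacle is organizational rather than conceptual: one must be meticulous in tracking both the ball structure $\mathrm{supp}(K_{\alpha})=\mathbb{Z}_{p}^{n}$ and the ultrametric behaviour of $\|x_{0}-y\|_{p}$ in every sub-configuration, since a single misidentified sign or misplaced support would break the argument. I would therefore present the proof as two \textbf{Case} environments, Case~1 for $\varphi(x_{0})>0$ and Case~2 for $\varphi(x_{0})=0$, each exhausting the possibilities for $(x_{0},\mathrm{supp}(\varphi))$ relative to $\mathbb{Z}_{p}^{n}$, so that the non-negativity of $(K_{\alpha}\ast\varphi)(x_{0})$ is visibly accounted for in every scenario.
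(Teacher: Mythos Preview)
Your proposal reproduces the paper's argument almost verbatim, including the same split into Case~1/Case~2 and the same appeals to (\ref{JA_rep})--(\ref{JA_rep2}); but both share a genuine gap. The trichotomy ``$\mathrm{supp}(\varphi)\subseteq\mathbb{Z}_p^n$, disjoint from $\mathbb{Z}_p^n$, or $\supseteq\mathbb{Z}_p^n$'' is not exhaustive (the support of a test function is only a finite \emph{union} of balls), and, more seriously, your claim in Case~2 that ``the integrand is identically zero in every configuration'' fails in the sub-case $\mathrm{supp}(\varphi)\subsetneq\mathbb{Z}_p^n$ with $x_0\in\mathbb{Z}_p^n\setminus\mathrm{supp}(\varphi)$, which neither you nor the paper addresses. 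There $K_\alpha(x_0-y)>0$ for $y\in\mathrm{supp}(\varphi)$ while $\varphi(y)<0$, so $(K_\alpha\ast\varphi)(x_0)$ is strictly negative, not zero. The parallel claim in Case~1 that ``$\varphi(x_0-y)>0$ throughout the domain of integration'' is likewise unjustified: a test function with a positive global maximum may take negative values elsewhere on $\mathbb{Z}_p^n$.

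This is not a bookkeeping slip. Take $n=1$, any $\alpha>1$, $\varphi=-1_{p\mathbb{Z}_p}\in\mathcal{D}(\mathbb{Q}_p)$, and $x_0=1$. Then $\sup\varphi=0=\varphi(1)\ge 0$, while for $y\in 1+p\mathbb{Z}_p$ one has $|y|_p=1$ and hence $K_\alpha(y)=\frac{1-p^{-\alpha}}{1-p^{\alpha-1}}(1-p^{\alpha-1})=1-p^{-\alpha}>0$, giving
\[
(J^\alpha\varphi)(1)=\int_{\mathbb{Q}_p}K_\alpha(y)\,\varphi(1-y)\,dy=-\int_{1+p\mathbb{Z}_p}K_\alpha(y)\,dy=-(1-p^{-\alpha})\,p^{-1}<0,
\]
so $(-J^\alpha\varphi)(1)>0$. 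With the paper's definition of the positive maximum principle (which quantifies over \emph{every} point where the supremum is attained), this is a counterexample to the statement itself, so the case analysis cannot be salvaged by adding sub-cases. The obstruction you flagged as ``organizational rather than conceptual'' is in fact substantive.
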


\begin{lemma}
\label{densidad}For all $\lambda >0$ we have that $Ran(\lambda +J^{\alpha })$
is dense in $C_{0}(%
\mathbb{Q}
_{p}^{n}).$
\end{lemma}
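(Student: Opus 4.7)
The plan is to prove the stronger statement that $\mathcal{D}(\mathbb{Q}_p^n)$ itself is already contained in $\mathrm{Ran}(\lambda+J^{\alpha})$, and then invoke the well-known density of $\mathcal{D}(\mathbb{Q}_p^n)$ in $C_0(\mathbb{Q}_p^n)$.

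Fix $\lambda>0$ and let $\varphi\in\mathcal{D}(\mathbb{Q}_p^n)$. The strategy is to solve the equation $(\lambda+J^{\alpha})\psi=\varphi$ explicitly via the Fourier transform. Since $J^{\alpha}$ has symbol $\widehat{K}_{\alpha}(\xi)=(\max\{1,\|\xi\|_p\})^{-\alpha}$, the natural candidate is
\begin{equation*}
\psi(x):=\mathcal{F}^{-1}_{\xi\to x}\!\left[\frac{\widehat{\varphi}(\xi)}{\lambda+\widehat{K}_{\alpha}(\xi)}\right].
\end{equation*}
I would then verify that $\psi\in\mathcal{D}(\mathbb{Q}_p^n)$ and that $(\lambda+J^{\alpha})\psi=\varphi$, which immediately gives $\varphi\in\mathrm{Ran}(\lambda+J^{\alpha})$.

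The verification that $\psi\in\mathcal{D}(\mathbb{Q}_p^n)$ reduces to checking that the multiplier $m(\xi):=(\lambda+\widehat{K}_{\alpha}(\xi))^{-1}$ preserves test functions under multiplication. By Remark \ref{Lemma 5.2 p-138-139}$(ii)$, $\widehat{K}_{\alpha}$ is locally constant, and since $\lambda>0$ we have $\lambda\leq\lambda+\widehat{K}_{\alpha}(\xi)\leq\lambda+1$, so $m(\xi)$ is well defined, bounded, and locally constant on $\mathbb{Q}_p^n$. Since $\widehat{\varphi}\in\mathcal{D}(\mathbb{Q}_p^n)$ by \cite[Lemma 4.8.1]{V-V-Z}, the product $m\,\widehat{\varphi}$ is locally constant with compact support (equal to $\mathrm{supp}(\widehat{\varphi})$), hence lies in $\mathcal{D}(\mathbb{Q}_p^n)$. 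Applying $\mathcal{F}^{-1}$, which is a homeomorphism of $\mathcal{D}(\mathbb{Q}_p^n)$ onto itself, yields $\psi\in\mathcal{D}(\mathbb{Q}_p^n)\subseteq\mathrm{Dom}(J^{\alpha})$. Applying $\lambda+J^{\alpha}$ and using the definition \eqref{Def_J} gives $(\lambda+J^{\alpha})\psi=\varphi$.

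To conclude, since every $\varphi\in\mathcal{D}(\mathbb{Q}_p^n)$ lies in $\mathrm{Ran}(\lambda+J^{\alpha})$ and $\mathcal{D}(\mathbb{Q}_p^n)$ is dense in $C_0(\mathbb{Q}_p^n)$ (a standard fact, which follows from the density of locally constant compactly supported functions in the uniform norm on any locally compact, totally disconnected space), the range of $\lambda+J^{\alpha}$ is dense in $C_0(\mathbb{Q}_p^n)$. I do not expect any genuine obstacle: the only nontrivial point is verifying that $m(\xi)=(\lambda+\widehat{K}_{\alpha}(\xi))^{-1}$ is locally constant, which in turn relies crucially on the fact that the symbol $\widehat{K}_{\alpha}(\xi)$ itself is locally constant, a feature peculiar to the non-archimedean setting that makes this density argument much cleaner than in the archimedean case.
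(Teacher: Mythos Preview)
Your proposal is correct and follows essentially the same approach as the paper: both solve $(\lambda+J^{\alpha})\psi=\varphi$ explicitly via the Fourier transform, use the local constancy of $\widehat{K}_{\alpha}$ (Remark~\ref{Lemma 5.2 p-138-139}(ii)) to conclude that $\widehat{\varphi}/(\lambda+\widehat{K}_{\alpha})\in\mathcal{D}(\mathbb{Q}_p^n)$, and then invoke the density of $\mathcal{D}(\mathbb{Q}_p^n)$ in $C_0(\mathbb{Q}_p^n)$. Your write-up is somewhat more detailed (you make the bounds on the multiplier and the density step explicit), but the argument is the same.
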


\begin{proof}
Let $\lambda >0$ and $\varphi \in \mathcal{D}(%
\mathbb{Q}
_{p}^{n}).$ Considering the equation
\begin{equation}
(\lambda +J^{\alpha })u=\varphi ,  \label{equation}
\end{equation}%
we have that $u(\xi )=\mathcal{F}_{\xi \rightarrow x}^{-1}\left( \frac{%
\widehat{\varphi }(\xi )}{\lambda +\widehat{K}_{\alpha }(\xi
)}\right) $ is a solution of the equation (\ref{equation}). \ Since
$\widehat{\varphi }(\xi
)\in \mathcal{D}(%
\mathbb{Q}
_{p}^{n}),$ then by Remark \ref{Lemma 5.2 p-138-139}-$(ii)$ we have that $%
\frac{\widehat{\varphi }(\xi )}{\lambda +\widehat{K}_{\alpha }(\xi )}\in
\mathcal{D}(%
\mathbb{Q}
_{p}^{n}).$ Therefore, $u\in \mathcal{D}(%
\mathbb{Q}
_{p}^{n}).$
\end{proof}

\begin{theorem}
\label{the_closure}The closure $\overline{-J^{\alpha }}$ of the operator $%
-J^{\alpha }$ on $C_{0}(%
\mathbb{Q}
_{p}^{n})$ is single-valued and generates a strongly continuous, positive,
contraction semigroup $\left\{ T(t)\right\} $ on $C_{0}(%
\mathbb{Q}
_{p}^{n})$.
\end{theorem}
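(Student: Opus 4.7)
The plan is to deduce the result from the Hille–Yosida–Ray characterization of generators of Feller semigroups on $C_0(\mathbb{Q}_p^n)$: a linear operator $A$ with dense domain is closable, its closure $\overline{A}$ is single-valued, and $\overline{A}$ generates a strongly continuous, positive, contraction semigroup if and only if $A$ satisfies the positive maximum principle and $\operatorname{Ran}(\lambda - A)$ is dense in $C_0(\mathbb{Q}_p^n)$ for some (equivalently, every) $\lambda > 0$. For $A = -J^{\alpha}$ with domain $\mathcal{D}(\mathbb{Q}_p^n)$, the positive maximum principle is exactly Theorem \ref{maximum_principle}, and the density of $\operatorname{Ran}(\lambda + J^{\alpha}) = \operatorname{Ran}(\lambda - (-J^{\alpha}))$ for every $\lambda > 0$ is exactly Lemma \ref{densidad}. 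Density of $\mathcal{D}(\mathbb{Q}_p^n)$ in $C_0(\mathbb{Q}_p^n)$ is a standard fact of $p$-adic analysis.

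To make the verification concrete, I would first derive the sup-norm dissipativity that the positive maximum principle encodes. For real-valued $\varphi \in \mathcal{D}(\mathbb{Q}_p^n)$, pick $x_0$ realizing $|\varphi(x_0)| = \|\varphi\|_{L^\infty}$; replacing $\varphi$ by $-\varphi$ if needed, Theorem \ref{maximum_principle} yields $(-J^{\alpha})\varphi(x_0) \leq 0$, so $(\lambda + J^{\alpha})\varphi(x_0) \geq \lambda \|\varphi\|_{L^\infty}$, and therefore $\|(\lambda + J^{\alpha})\varphi\|_{L^\infty} \geq \lambda \|\varphi\|_{L^\infty}$. The complex case reduces to the real case by the familiar device of multiplying $\varphi$ by a unimodular constant so that $\varphi(x_0)$ becomes real and equal to $\|\varphi\|_{L^\infty}$ at a point of maximum modulus. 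Dissipativity forces $\lambda + J^{\alpha}$ to be injective with contractive inverse on its range; together with the range density from Lemma \ref{densidad}, this shows $-J^{\alpha}$ is closable and that $\overline{-J^{\alpha}}$ is single-valued with $\lambda - \overline{-J^{\alpha}}$ a bijection onto $C_0(\mathbb{Q}_p^n)$ of norm $\leq \lambda^{-1}$, which is precisely the Hille–Yosida contraction-semigroup condition.

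With these ingredients the Hille–Yosida–Ray theorem (see, e.g., \cite{Berg-Gunnar}) yields that $\overline{-J^{\alpha}}$ generates a strongly continuous contraction semigroup $\{T(t)\}_{t\geq 0}$ on $C_0(\mathbb{Q}_p^n)$, and the positivity of $T(t)$ is built into the conclusion. To see the positivity explicitly, I would argue that the resolvent $R_\lambda = (\lambda - \overline{-J^{\alpha}})^{-1}$ is positivity-preserving: if $g \geq 0$ and $f = R_\lambda g$ were strictly negative somewhere, then $-f$ would attain a nonnegative maximum at some $x_1$, and the positive maximum principle (extended to the closure) would give $(-J^{\alpha})(-f)(x_1) \leq 0$, i.e., $J^{\alpha}f(x_1) \geq 0$; hence $g(x_1) = \lambda f(x_1) + J^{\alpha}f(x_1) < 0$, contradicting $g \geq 0$. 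Positivity of $T(t)$ then follows from the exponential formula $T(t)f = \lim_{n\to\infty} (n/t)^n R_{n/t}^n f$.

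The main delicate point will be justifying that the positive maximum principle transfers from the core $\mathcal{D}(\mathbb{Q}_p^n)$ to the full domain of the closure $\overline{-J^{\alpha}}$, so that the resolvent-positivity argument is legitimate on the closure rather than only on the dense subspace; this is a standard but substantive step in Feller-generator theory and is the place where the density conditions in Lemma \ref{densidad} play an essential role.
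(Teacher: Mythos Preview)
Your proposal is correct and follows essentially the same route as the paper: the paper's proof simply invokes \cite[Chapter 4, Theorem 2.2]{St-Th} (the Hille--Yosida--Ray criterion in Ethier--Kurtz) after noting that $\mathcal{D}(\mathbb{Q}_p^n)$ is dense in $C_0(\mathbb{Q}_p^n)$, that $-J^{\alpha}$ satisfies the positive maximum principle (Theorem \ref{maximum_principle}), and that $\operatorname{Ran}(\lambda+J^{\alpha})$ is dense (Lemma \ref{densidad}). Your write-up merely unpacks the dissipativity and resolvent-positivity content of that cited theorem rather than appealing to it as a black box, so there is no substantive difference in strategy.
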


\begin{proof}
It follows from Theorem \ref{maximum_principle}, Lemma \ref{densidad} and
\cite[Chapter 4, Theorem 2.2]{St-Th}, taking into account that $\mathcal{D}(%
\mathbb{Q}
_{p}^{n})$ is dense in $C_{0}(%
\mathbb{Q}
_{p}^{n}),$ see e.g. \cite[Proposition 1.3]{Taibleson}.
\end{proof}

\begin{remark}
\label{def_negative_definite}A function $f:\mathbb{%
\mathbb{Q}
}_{p}^{n}\rightarrow
\mathbb{C}
$ is called negative definite, if$\ $%
\begin{equation*}
\sum\nolimits_{i,j=1}^{m}\left( f(x_{i})+\overline{f(x_{j})}%
-f(x_{i}-x_{j})\right) \lambda _{i}\overline{\lambda _{j}}\geq 0
\end{equation*}%
for all $m\in \mathbb{N},$ $x_{1},\ldots ,x_{m}\in $ $\mathbb{%
\mathbb{Q}
}_{p}^{n},$ $\lambda _{1},\ldots ,\lambda _{m}$ $\in $ $\mathbb{C}$.

Nothe that for all $\xi \in
\mathbb{Q}
_{p}^{n}$ we have that $\widehat{K}_{\alpha }(\xi )=(\max \{1,||\xi
||_{p}\})^{-\alpha }\leq \widehat{K}_{\alpha }(0),$ so that by \cite[Chapter
II]{Berg-Gunnar} we have that the function $\widehat{K}_{\alpha }(\xi )$ no
is a function negative definite. Therefore, the operator $-J^{\alpha }$ is a
pseudo-differential operator that satisfies the \textit{positive maximum
principle and }whose symbol is not a negative definite function.
\end{remark}

\section{\label{Section_4}The Pseudo-differential Operator $-J^{\protect%
\alpha }$ on $L^{2}(%
\mathbb{Q}
_{p}^{n})$}

In this section, consider the operator $-J^{\alpha }$ in $L^{2}(%
\mathbb{Q}
_{p}^{n}).$ By \cite[Remarks-p. 138]{Taibleson}, we have that if $f\in L^{2}(%
\mathbb{Q}
_{p}^{n})$ then $(-J^{\alpha }f)\in L^{2}(%
\mathbb{Q}
_{p}^{n})$ and $||-J^{\alpha }f||_{L^{2}(%
\mathbb{Q}
_{p}^{n})}\leq ||f||_{L^{2}(%
\mathbb{Q}
_{p}^{n})}.$ In this case $D(-J^{\alpha })=L^{2}(%
\mathbb{Q}
_{p}^{n}).$ The main objective of this section is to demonstrate  that this
operator is $m-$dissipative, which will be crucial to prove that $-J^{\alpha
}$ is the infinitesimal generator of a $C_{0}-$semigroup of contraction $%
T(t),$ $t\geq 0,$ on $L^{2}(%
\mathbb{Q}
_{p}^{n}).$ For more details, the reader can consult \cite{C-H}, \cite%
{Lunner-Phillips}, \cite{Pazy}, \cite{St-Th}.

\begin{remark}
\label{Graf_closed} The graph $G(-J^{\alpha })$ of $-J^{\alpha }$ is defined
by
\begin{equation*}
G(-J^{\alpha })=\left\{ (u,f)\in L^{2}(%
\mathbb{Q}
_{p}^{n})\times L^{2}(%
\mathbb{Q}
_{p}^{n});\text{ }u\in D(-J^{\alpha })\text{ and }f=-J^{\alpha }u\right\} .
\end{equation*}%
Therefore by \cite[Remark 2.1.6]{C-H} we have that $G(-J^{\alpha })$ is
closed in $L^{2}(%
\mathbb{Q}
_{p}^{n}).$
\end{remark}

\begin{definition}
\cite[Definition 1.2]{Lunner-Phillips} An operator $A$ with domain $D(A)$ in
$L^{2}(%
\mathbb{Q}
_{p}^{n})$ is called dissipative if
\begin{equation*}
re\left\langle Af,f\right\rangle \leq 0,\text{ \ for all }f\in D(A),
\end{equation*}%
where $L^{2}(%
\mathbb{Q}
_{p}^{n})$ is the Hilbert space with the scalar product
\begin{equation*}
\left\langle f,g\right\rangle =\int\nolimits_{%
\mathbb{Q}
_{p}^{n}}f(x)\overline{g}(x)d^{n}x,\text{ \ }f,g\in L^{2}(%
\mathbb{Q}
_{p}^{n}).
\end{equation*}
\end{definition}

\begin{lemma}
\label{Dissipative}The operator $-J^{\alpha }$ is dissipative in $L^{2}(%
\mathbb{Q}
_{p}^{n}).$
\end{lemma}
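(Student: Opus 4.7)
The plan is to exploit the fact that $J^{\alpha}$ is a Fourier multiplier with a real, nonnegative symbol, so that $\langle J^{\alpha} f, f\rangle$ is automatically a nonnegative real number.

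First I would recall that for $f \in L^{2}(\mathbb{Q}_p^{n})$ the function $\widehat{f}$ lies in $L^{2}(\mathbb{Q}_p^{n})$ and, by Remark \ref{Lemma 5.2 p-138-139} together with the bound $||J^{\alpha} f||_{L^{2}} \leq ||f||_{L^{2}}$ mentioned at the start of Section \ref{Section_4}, the identity
\begin{equation*}
\widehat{J^{\alpha} f}(\xi) = (\max\{1, ||\xi||_{p}\})^{-\alpha}\, \widehat{f}(\xi)
\end{equation*}
holds in $L^{2}(\mathbb{Q}_p^{n})$ (it holds on the dense subspace $\mathcal{D}(\mathbb{Q}_p^{n})$ by definition (\ref{Def_J}) and extends by continuity).

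Next, I would apply the Plancherel identity on $L^{2}(\mathbb{Q}_p^{n})$, which the excerpt records as $||g||_{L^{2}} = ||\mathcal{F}g||_{L^{2}}$ and which polarises to $\langle g, h\rangle = \langle \widehat{g}, \widehat{h}\rangle$. This gives
\begin{equation*}
\langle J^{\alpha} f, f\rangle = \langle \widehat{J^{\alpha} f}, \widehat{f}\rangle = \int_{\mathbb{Q}_p^{n}} (\max\{1, ||\xi||_{p}\})^{-\alpha}\, |\widehat{f}(\xi)|^{2}\, d^{n}\xi.
\end{equation*}
Since the symbol $(\max\{1, ||\xi||_{p}\})^{-\alpha}$ is strictly positive and real, the integrand is nonnegative and real, so the integral is a nonnegative real number. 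Hence
\begin{equation*}
\operatorname{re}\langle -J^{\alpha} f, f\rangle = -\int_{\mathbb{Q}_p^{n}} (\max\{1, ||\xi||_{p}\})^{-\alpha}\, |\widehat{f}(\xi)|^{2}\, d^{n}\xi \leq 0,
\end{equation*}
which is precisely the dissipativity condition. There is essentially no obstacle here: the only point requiring a sentence of care is the passage from the pointwise symbol formula on $\mathcal{D}(\mathbb{Q}_p^{n})$ to the corresponding $L^{2}$-identity, but this is routine since $\mathcal{D}(\mathbb{Q}_p^{n})$ is dense in $L^{2}(\mathbb{Q}_p^{n})$ and the multiplier is bounded by $1$.
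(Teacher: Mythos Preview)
Your proof is correct and follows essentially the same route as the paper: both pass to the Fourier side via Parseval/Plancherel and observe that the symbol $(\max\{1,||\xi||_{p}\})^{-\alpha}$ is real and nonnegative, yielding $\langle -J^{\alpha}f,f\rangle = -\int (\max\{1,||\xi||_{p}\})^{-\alpha}|\widehat f(\xi)|^{2}\,d^{n}\xi \le 0$. The paper states the computation for $\varphi\in\mathcal{D}(\mathbb{Q}_p^{n})$ and invokes density, whereas you carry the density argument through explicitly to all of $L^{2}$; this is a minor presentational difference only.
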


\begin{proof}
Let fixed $\varphi \in \mathcal{D}(%
\mathbb{Q}
_{p}^{n}).$ Since $\mathcal{D}(%
\mathbb{Q}
_{p}^{n})$ is dense in $L^{2}(%
\mathbb{Q}
_{p}^{n}),$ see \cite[Chapter III]{Taibleson}, and the Parseval-Steklov
equality, see \cite[Section 5.3]{Albeverio et al}, we have that
\begin{eqnarray*}
\left\langle -J^{\alpha }\varphi ,\varphi \right\rangle
&=&\left\langle -\mathcal{F}_{\xi \rightarrow x}^{-1}\left[ (\max
\{1,||\xi ||_{p}\})^{-\alpha }\widehat{\varphi }(\xi )\right]
,\varphi \right\rangle
\\
&=&\left\langle -(\max \{1,||\xi ||_{p}\})^{-\alpha }\widehat{\varphi },%
\widehat{\varphi }\right\rangle  \\
&=&-\int\nolimits_{%
\mathbb{Q}
_{p}^{n}}(\max \{1,||\xi ||_{p}\})^{-\alpha }\left\vert \widehat{\varphi }%
(\xi )\right\vert ^{2}d^{n}\xi \leq 0.
\end{eqnarray*}
\end{proof}

\begin{remark}
By \cite[Chapter 4-Lemma 2.1]{St-Th}, we have that the linear operator $%
-J^{\alpha }$ on $C_{0}(%
\mathbb{Q}
_{p}^{n})$ is dissipative, in the sense that for all $f\in C_{0}(%
\mathbb{Q}
_{p}^{n})$ and $\lambda >0$ we have that $||\lambda f-Af||_{L^{\infty }(%
\mathbb{Q}
_{p}^{n})}\geq \lambda ||f||_{L^{\infty }(%
\mathbb{Q}
_{p}^{n})}.$ This definition is valid for any Banach space with its
corresponding norm, for more details see \cite{C-H}, \cite{Lunner-Phillips}.
\ \ \
\end{remark}

\begin{definition}
\label{m-Dissipative} \cite[Definition 2.2.2]{C-H} An operator $A$ in $L^{2}(%
\mathbb{Q}
_{p}^{n})$ is $m-$dissipative if

$(i)$ $A$ is dissipative;

$(ii)$ for all $\lambda >0$ and all $f\in L^{2}(%
\mathbb{Q}
_{p}^{n}),$ there exists $u\in D(A)$ such that $u-\lambda Au=f.$
\end{definition}

\begin{lemma}
\label{self-adjoint}The operator $-J^{\alpha }$ is self-adjoint, i.e.
\begin{equation*}
\left\langle -J^{\alpha }f,g\right\rangle =\left\langle f,-J^{\alpha
}g\right\rangle ,\text{ for all }f,g\in L^{2}(%
\mathbb{Q}
_{p}^{n}).
\end{equation*}
\end{lemma}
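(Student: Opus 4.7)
The plan is to exploit the fact that $J^{\alpha}$ acts on the Fourier side as multiplication by the real-valued (in fact positive) symbol $\widehat{K}_{\alpha}(\xi)=(\max\{1,\|\xi\|_{p}\})^{-\alpha}$, and then reduce the claim to a routine application of the Parseval--Steklov identity. Since the symbol is real, passing it across the inner product on the frequency side incurs no conjugation, and self-adjointness will drop out immediately.

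First, I would verify that the identity $\widehat{J^{\alpha}f}(\xi)=\widehat{K}_{\alpha}(\xi)\widehat{f}(\xi)$ extends from $\mathcal{D}(\mathbb{Q}_{p}^{n})$ to all of $L^{2}(\mathbb{Q}_{p}^{n})$. This uses the $L^{2}$-continuity of $J^{\alpha}$ (the estimate $\|J^{\alpha}f\|_{L^{2}}\leq\|f\|_{L^{2}}$ recalled at the start of Section~\ref{Section_4}), the unitarity of $\mathcal{F}$ on $L^{2}(\mathbb{Q}_{p}^{n})$, and the density of $\mathcal{D}(\mathbb{Q}_{p}^{n})$ in $L^{2}(\mathbb{Q}_{p}^{n})$.

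Next, for $f,g\in L^{2}(\mathbb{Q}_{p}^{n})$ the Parseval--Steklov equality (as used in the proof of Lemma~\ref{Dissipative}) would give
\begin{equation*}
\left\langle -J^{\alpha}f,g\right\rangle
=\left\langle \widehat{-J^{\alpha}f},\widehat{g}\right\rangle
=-\int_{\mathbb{Q}_{p}^{n}}\widehat{K}_{\alpha}(\xi)\,\widehat{f}(\xi)\,\overline{\widehat{g}(\xi)}\,d^{n}\xi.
\end{equation*}
Because $\widehat{K}_{\alpha}(\xi)\in\mathbb{R}_{+}$, we have $\widehat{K}_{\alpha}(\xi)=\overline{\widehat{K}_{\alpha}(\xi)}$, so the integrand equals $\widehat{f}(\xi)\,\overline{\widehat{K}_{\alpha}(\xi)\widehat{g}(\xi)}$; a second application of Parseval--Steklov then rewrites the expression as $\langle f,-J^{\alpha}g\rangle$, which is the desired identity.

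There is no substantial obstacle in this argument; the only point requiring care is the $L^{2}$-extension of the Fourier-multiplier formula, which is controlled by the boundedness of $J^{\alpha}$ on $L^{2}(\mathbb{Q}_{p}^{n})$ noted at the opening of the section. The reality of the symbol does all the work.
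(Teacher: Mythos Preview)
Your proposal is correct and follows essentially the same route as the paper: pass to the Fourier side via the Parseval--Steklov identity, use that the symbol $(\max\{1,\|\xi\|_{p}\})^{-\alpha}$ is real to shift it from $\widehat{f}$ onto $\overline{\widehat{g}}$, and then apply Parseval--Steklov once more. Your explicit remark about extending the multiplier identity from $\mathcal{D}(\mathbb{Q}_{p}^{n})$ to $L^{2}(\mathbb{Q}_{p}^{n})$ via the $L^{2}$-boundedness of $J^{\alpha}$ is a welcome clarification that the paper leaves implicit.
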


\begin{proof}
For $f,g\in L^{2}(%
\mathbb{Q}
_{p}^{n})$ and the Parseval-Steklov equality, we have that
\begin{align*}
\left\langle -J^{\alpha }f,g\right\rangle &=\left\langle
-\mathcal{F}_{\xi
\rightarrow x}^{-1}\left[ (\max \{1,||\xi ||_{p}\})^{-\alpha }\widehat{f}%
(\xi )\right] ,g\right\rangle \\
&=-\int\nolimits_{%
\mathbb{Q}
_{p}^{n}}(\max \{1,||\xi ||_{p}\})^{-\alpha }\widehat{f}(\xi )\overline{%
\breve{g}}(\xi )d^{n}\xi \\
&=-\int\nolimits_{%
\mathbb{Q}
_{p}^{n}}\overline{(\max \{1,||\xi ||_{p}\})^{-\alpha }}\widehat{f}(\xi )%
\left[ \int\nolimits_{%
\mathbb{Q}
_{p}^{n}}\chi _{p}(x\cdot \xi )\overline{g}(x)d^{n}x\right] d^{n}\xi \\
&=-\int\nolimits_{%
\mathbb{Q}
_{p}^{n}}\widehat{f}(\xi )\overline{(\max \{1,||\xi ||_{p}\})^{-\alpha }}%
\overline{\widehat{g}}(\xi )d^{n}\xi \\
&=\left\langle f,-\mathcal{F}_{\xi \rightarrow x}^{-1}\left[ (\max
\{1,||\xi ||_{p}\})^{-\alpha }\widehat{g}(\xi )\right] \right\rangle
=\left\langle f,-J^{\alpha }g\right\rangle .
\end{align*}
\end{proof}

\begin{theorem}
\label{m-dissipative}The operator $-J^{\alpha }:$ $L^{2}(%
\mathbb{Q}
_{p}^{n})\rightarrow L^{2}(%
\mathbb{Q}
_{p}^{n})$ is $m-$dissipative.
\end{theorem}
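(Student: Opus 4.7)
The plan is to verify the two conditions in Definition \ref{m-Dissipative}. Condition $(i)$, dissipativity of $-J^{\alpha}$ on $L^{2}(\mathbb{Q}_{p}^{n})$, is already established in Lemma \ref{Dissipative}. Therefore the entire task reduces to condition $(ii)$: for each $\lambda>0$ and each $f\in L^{2}(\mathbb{Q}_{p}^{n})$, produce $u\in D(-J^{\alpha})=L^{2}(\mathbb{Q}_{p}^{n})$ such that $u-\lambda(-J^{\alpha})u=f$, i.e.\ $u+\lambda J^{\alpha}u=f$.

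First I would apply the Fourier transform (which is unitary on $L^{2}$) to turn the functional equation into the pointwise multiplicative equation
\begin{equation*}
\bigl(1+\lambda\widehat{K}_{\alpha}(\xi)\bigr)\widehat{u}(\xi)=\widehat{f}(\xi),\qquad \widehat{K}_{\alpha}(\xi)=(\max\{1,\|\xi\|_{p}\})^{-\alpha},
\end{equation*}
and read off the candidate solution
\begin{equation*}
\widehat{u}(\xi)=\frac{\widehat{f}(\xi)}{1+\lambda(\max\{1,\|\xi\|_{p}\})^{-\alpha}},\qquad u:=\mathcal{F}^{-1}\widehat{u}.
\end{equation*}

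Next I would check that $u$ lies in $L^{2}(\mathbb{Q}_{p}^{n})$ and solves the equation. The multiplier $m_{\lambda}(\xi):=\bigl(1+\lambda(\max\{1,\|\xi\|_{p}\})^{-\alpha}\bigr)^{-1}$ satisfies $0<m_{\lambda}(\xi)\leq 1$ for every $\xi\in\mathbb{Q}_{p}^{n}$, so $\|\widehat{u}\|_{L^{2}}\leq\|\widehat{f}\|_{L^{2}}<\infty$; by Plancherel, $u\in L^{2}(\mathbb{Q}_{p}^{n})=D(-J^{\alpha})$. Since $J^{\alpha}$ is realised on $L^{2}$ as the Fourier multiplier by $\widehat{K}_{\alpha}(\xi)$ (as used in Lemmas \ref{Dissipative} and \ref{self-adjoint}), one has $\widehat{J^{\alpha}u}=\widehat{K}_{\alpha}\widehat{u}$, and hence
\begin{equation*}
\widehat{u+\lambda J^{\alpha}u}=(1+\lambda\widehat{K}_{\alpha})\widehat{u}=\widehat{f},
\end{equation*}
so $u+\lambda J^{\alpha}u=f$ by injectivity of the Fourier transform on $L^{2}$.

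There is essentially no obstacle here: the step that could in principle be delicate is controlling the multiplier $m_{\lambda}$ near the singular locus of the symbol, but because the symbol $\widehat{K}_{\alpha}$ is everywhere non-negative and bounded by $1$, the denominator $1+\lambda\widehat{K}_{\alpha}$ is bounded away from zero uniformly on $\mathbb{Q}_{p}^{n}$, which makes the boundedness of $m_{\lambda}$ immediate. Once $(i)$ and $(ii)$ are in place, the theorem follows directly from Definition \ref{m-Dissipative}.
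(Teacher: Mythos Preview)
Your argument is correct, and it takes a different route from the paper's. The paper does not verify condition $(ii)$ of Definition~\ref{m-Dissipative} directly; instead it combines three ingredients---the closedness of the graph (Remark~\ref{Graf_closed}), dissipativity (Lemma~\ref{Dissipative}), and self-adjointness (Lemma~\ref{self-adjoint})---and then invokes an abstract result (Theorem~2.4.5 in Cazenave--Haraux) to the effect that a self-adjoint dissipative operator on a Hilbert space is automatically $m$-dissipative. Your approach bypasses Lemma~\ref{self-adjoint} entirely and instead solves the resolvent equation explicitly via the Fourier multiplier, exactly as Lemma~\ref{densidad} does in the $C_{0}$ setting; this is more elementary and fully self-contained, requiring no external reference. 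The paper's route, on the other hand, places the operator squarely within the standard abstract framework and records the extra structural fact of self-adjointness, which is of independent interest even if it is not strictly needed for $m$-dissipativity once one argues as you do.
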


\begin{proof}
The result follow from Remark \ref{Graf_closed}, Lemma \ref{Dissipative} and
Lemma \ref{self-adjoint}, by well-known results in the theory of dissipative
operators, see e.g. \cite[Theorem 2.4.5]{C-H}.
\end{proof}

\begin{remark}
\label{generator}The (infinitesimal)\ generator of a semigroups $%
(T(t))_{t\geq 0}$ is the linear operator $L$ defined by
\begin{equation*}
D(L)=\left\{ f\in L^{2}(%
\mathbb{Q}
_{p}^{n});\frac{T(t)x-x}{t}\text{ has a limit in }L^{2}(%
\mathbb{Q}
_{p}^{n})\text{ as }t\rightarrow 0^{+}\right\} ,
\end{equation*}%
and%
\begin{equation*}
Lf=\lim_{t\rightarrow 0^{+}}\frac{T(t)f-f}{t},
\end{equation*}%
for all $f\in D(L).$

The linear operator $-J^{\alpha }$ is the generator of a contraction
semigroups $(T(t))_{t\geq 0}$ in $L^{2}(%
\mathbb{Q}
_{p}^{n}),$ i.e. the family of semigroups $(T(t))_{t\geq 0}$ satisfies:

$(i)$ $||T(t)||_{L^{2}(%
\mathbb{Q}
_{p}^{n})}\leq 1$ for all $t\geq 0;$

$(ii)$ $T(0)=I;$

$(iii)$ $T(t+s)=T(t)T(s)$ for all $s,t\geq 0,$

$(iv)$ for all $f\in L^{2}(%
\mathbb{Q}
_{p}^{n}),$ the function $t\mapsto T(t)f$ belongs to $C([0,\infty ),L^{2}(%
\mathbb{Q}
_{p}^{n}))$.

For more details, the reader can consult \cite[Section 3.4]{C-H}.
\end{remark}

\begin{remark}
\label{C_0 semigroups}By \cite[Theorem 4.3]{Pazy}, \cite[Theorem 4.5-(a)]%
{Pazy}, \cite[Definition 2.1]{Pazy} and Theorem \ref{m-dissipative}, we have
that the linear operator $-J^{\alpha }$ is the infinitesimal generator of a $%
C_{0}-$semigroup of contractions $T(t),$ $t\geq 0,$ on $L^{2}(%
\mathbb{Q}
_{p}^{n}).$

Let's consider the problem the inhomogeneous initial value problem
\begin{equation}
\left\{
\begin{array}{ll}
\frac{\partial u}{\partial t}(x,t)=-J^{\alpha }u(x,t)+f(t) & t>0\text{,\ }%
x\in
\mathbb{Q}
_{p}^{n} \\
&  \\
u(x,0)=u_{0}\in L^{2}(%
\mathbb{Q}
_{p}^{n})\text{,} &
\end{array}%
\right.  \label{Cauchy_problem}
\end{equation}%
where $f:[0,T]\rightarrow L^{2}(%
\mathbb{Q}
_{p}^{n}),$ $T>0.$

Then, for $f\in L^{1}([0,T):L^{2}(%
\mathbb{Q}
_{p}^{n}))$ we have that the function
\begin{equation*}
u(t)=T(t)u_{0}+\int\nolimits_{0}^{t}T(t-s)f(s)ds,\text{ \ \ }0\leq
t\leq T,
\end{equation*}%
is the mild solution of the initial value problem (\ref{Cauchy_problem}) on $%
[0,T].$
\end{remark}


\begin{thebibliography}{99}
\bibitem{Albeverio et al} Albeverio S., Khrennikov A. Yu., Shelkovich V. M.,
Theory of $p$-adic distributions: linear and nonlinear models. London
Mathematical Society Lecture Note Series, 370. Cambridge University Press,
Cambridge, 2010.

\bibitem{Berg-Gunnar} Berg Christian, Forst Gunnar, Potential theory on
locally compact abelian groups. Springer-Verlag, New York-Heidelberg, 1975.

\bibitem{Casas-Zuniga} Casas-S\'{a}nchez O. F., Z\'{u}\~{n}iga-Galindo W.
A., $p$-adic elliptic quadratic forms, parabolic-type pseudodifferential
equations with variable coefficients and Markov processes, $p$-Adic Numbers
Ultrametric Anal. Appl. 6 (2014), no. 1, 1--20.

\bibitem{C-H} Cazenave Thierry, Haraux Alain, An introduction to semilinear
evolution equations. Oxford University Press, 1998.

\bibitem{Ch-Z-1} Chac\'{o}n-Cortes L. F., Z\'{u}\~{n}iga-Galindo W. A.,
Nonlocal operators, parabolic-type equations, and ultrametric random walks.
J. Math. Phys. 54, 113503 (2013) $\And $ Erratum 55 (2014), no. 10, 109901,
1 pp.

\bibitem{Courrege} Courr\`{e}ge Ph., Sur la forme int\'{e}gro-diff\'{e}%
rentielle des op\'{e}rateurs de $C_{k}^{\infty }$ dans $C$ satisfaisant au
principe du maximum. S\'{e}minaire Brelot-Choquet-Deny. Th\'{e}orie du
potentiel, tome 10, no 1 (1965-1966), exp. no 2, p. 1-38.

\bibitem{Dyn} Dynkin E. B., Markov processes. Vol. I. Springer-Verlag, 1965.

\bibitem{Gu-To-1} Guti\'{e}rrez Garc\'{\i}a I., Torresblanca-Badillo A., $p-$%
adic models of ultrametric diffusion, linear and logarithmic landscapes,
first passage time problem and survival probability. arXiv:1812.04965

\bibitem{Gu-To-2} Guti\'{e}rrez Garc\'{\i}a I., Torresblanca-Badillo A.,
Symbols of non-archimedean elliptic pseudo-differential operators, Feller
semigroups, Markov transition function and negative definite functions.
arXiv:1812.00041

\bibitem{Hoh-Libro} Hoh W., Pseudo differential operators generating Markov
processes, Habilitationsschrift, Universit\"{a}t Bielefeld (1998).

\bibitem{Jacob-1992} Jacob N., Feller semigroups, Dirichlet forms and pseudo
differential operators. Forum Math. 4, 433-446 (1992).

\bibitem{Jacob-vol-1} Jacob N., Pseudo differential operators and Markov
processes. Vol. I. Fourier analysis and semigroups. Imperial College Press,
London, 2001.

\bibitem{Jacob-Schilling} Jacob N., Schilling R.L., L\'{e}vy-Type Processes
and Pseudodifferential Operators, L\'{e}vy Processes: Theory and
Applications. Birkh\"{a}user, Boston, 139-168 (20011).

\bibitem{Khrennikov-Kochubei} Khrennikov A. Y., Kochubei A. N., $p-$Adic
Analogue of the Porous Medium Equation. J Fourier Anal Appl (2018), $24:$%
1401--1424.

\bibitem{Kochubei-1991} Kochubei A. N., Parabolic equations over the field
of $p-$adic numbers. Izv. Akad. Nauk SSSR Ser. Mat. $55$:6 (1991),
1312-1330. In Russian; translated in Math. USSR Izvestiya $39$ (1992),
1263-1280. MR 93e:35050.

\bibitem{Kochubei-2001} Kochubei A. N., Pseudo-differential equations and
stochastic over non-Archimedean fields, Pure and Applied Mathematics, Marcel
Dekker, New York, 2001.

\bibitem{Lunner-Phillips} Lunner G. and Phillips R. S., Dissipative
operators in a Banach space, Pacific J. Math. 11 (1961), 679--698.

\bibitem{Pazy} Pazy A., Semigroups of Linear Operator and Applications to
Partial Differential Equations, Vol 44, Springer-Verlag, New York, 1983.

\bibitem{R-Zu} Rodr\'{\i}guez-Vega J. J., Z\'{u}\~{n}iga-Galindo W. A.,
Taibleson operators, $p$-adic parabolic equations and ultrametric diffusion,
Pacific J. Math. 237 (2), 327--347 (2008).

\bibitem{Schilling} Schilling R. L., Dirichlet operators and the positive
maximum principle, Integr. Equ. Oper. Theory 41 (2001), 74-92.

\bibitem{St-Th} S. N. Ethier and T. G. Kurtz., Markov Processes -
Characterization and convergence, Wiley Series in Probability and
Mathematical Statistics (John Wiley $\And $ Sons, New York, 1986).

\bibitem{Taibleson} Taibleson M. H., Fourier analysis on local fields.
Princeton University Press, 1975.

\bibitem{To-Z-2} Torresblanca-Badillo A., Z\'{u}\~{n}iga-Galindo W. A.,
Non-Archimedean Pseudodifferential Operators and Feller Semigroups. $p$-Adic
Numbers, Ultrametric Analysis and Applications, Vol. 10, No. 1, pp. 57-73,
2018.

\bibitem{To-Z} Torresblanca-Badillo A., Z\'{u}\~{n}iga-Galindo W. A.,
Ultrametric Diffusion, exponential landscapes, and the first passage time
problem, Acta Appl Math (2018), 157:93.

\bibitem{V-V-Z} Vladimirov V. S., Volovich I. V., Zelenov E. I., $p$-adic
analysis and mathematical physics. World Scientific, 1994.

\bibitem{Zu} Z\'{u}\~{n}iga-Galindo W. A., Parabolic equations and Markov
processes over $p$-adic fields, Potential Anal. $28:2$ (2008), 185--200.

\bibitem{Z-G} Z\'{u}\~{n}iga-Galindo W. A., Pseudodifferential Equations
Over Non-Archimedean Spaces, Lectures Notes in Mathematics 2174, Springer,
2016.

\bibitem{Z1} Z\'{u}\~{n}iga-Galindo W. A., The non-Archimedean stochastic
heat equation driven by Gaussian noise, J. Fourier Anal. Appl. 21 (2015),
no. 3, 600--627.
\end{thebibliography}
\end{document}